\newtheorem{theorem}{Theorem}[section]
\newtheorem{proposition}[theorem]{Proposition}
\newtheorem{definition}[theorem]{Definition}
\newtheorem{lemma}[theorem]{Lemma}
\newtheorem{corollary}[theorem]{Corollary}
\newtheorem{conjecture}[theorem]{Conjecture}
\numberwithin{equation}{section}
\newtheorem{gauss}{Lemma}
\DeclareFontFamily{OT2}{cmr}{\hyphenchar\font45 }
\DeclareFontShape{OT2}{cmr}{m}{n}{<5><6><7><8><9>gen*wncyr<10><10.95><12><14.4><17.28><20.74><24.88>wncyr10}{}
\DeclareFontShape{OT2}{cmr}{b}{n}{<5><6><7><8><9>gen*wncyb<10><10.95><12><14.4><17.28><20.74><24.88>wncyb10}{}
\DeclareMathAlphabet{\mathcyr}{OT2}{cmr}{m}{n}
\DeclareMathAlphabet{\mathcyb}{OT2}{cmr}{b}{n}
\SetMathAlphabet{\mathcyr}{bold}{OT2}{cmr}{b}{n}
\newcommand{\sh}{\mathcyr {sh}}
\newcommand{\R}{\mathbb R}
\newcommand{\Z}{{\mathbb Z}}
\newcommand{\C}{{\mathbb C}}
\newcommand{\Q}{{\mathbb Q}}
\newcommand{\odd}{\mathbb O}
\newcommand{\cusp}{\mathbb S}
\newcommand{\Zsp}{\mathcal Z}
\newcommand{\Set}{{\mathrm S}}
\newcommand{\V}{{\mathsf{Vect}}}
\newcommand{\W}{{\mathbf W}}
\newcommand{\m}{{\mathfrak m}}
\newcommand{\depth}{{\mathfrak D}}
\newcommand{\stuffle}{{\Psi}}
\newcommand{\inj}{{\Phi}}
\newcommand{\tr}[1]{{}^t\hspace{0mm}#1}
\newcommand{\cp}{\, \underline{\circ}\,}
\newcommand{\ec}{e\tbinom{m_1,\ldots,m_r}{n_1,\ldots,n_r}}
\newcommand{\cc}{c{\tbinom{m_1,\ldots,m_r}{n_1,\ldots,n_r}}}
\newcommand{\rank}{{\rm rank}\,}
\title[Multiple zeta values and period polynomials]{On linear relations among totally odd multiple zeta values related to period polynomials}
\author[K.~Tasaka]{Koji Tasaka}
\keywords{Multiple zeta values, Period polynomials}
\subjclass[2010]{Primary~11M32, Secondary~11F67}
\thanks{This work is partially supported by Japan Society for the Promotion of Science, Grant-in-Aid for JSPS Fellows (No. 241440) and Basic Science Research Program through the National Research Foundation of Korea (NRF) funded by the Ministry of Education (2013053914)}
\address[Koji Tasaka]{Graduate School of Mathematics, Nagoya University, Japan}
\email{koji.tasaka@math.nagoya-u.ac.jp}
\date{}
\begin{document}
\maketitle

\begin{abstract}
We show that there is a relationship between modular forms and totally odd multiple zeta values, by relating the matrix $E_{N,r}$, whose entries are given by the polynomial representations of the Ihara action, with even period polynomials.
We also consider the matrix $C_{N,r}$ defined by Brown and give a new upper bound of the rank of $C_{N,4}$.
This result gives support to the uneven part of the motivic Broadhurst-Kreimer conjecture of depth 4.
\end{abstract}

\section{Introduction}

In this paper, we will be interested in the relationship between $\Q$-linear relations among multiple zeta values 
\[ \zeta(n_1,\ldots,n_r) = \sum_{0<k_1<\cdots<k_r} \frac{1}{k_1^{n_1}\cdots k_r^{n_r}} \quad (n_1,\ldots,n_{r-1}\in \Z_{>0}, n_r\in\Z_{>1}) \] 
(MZVs for short) and period polynomials of the elliptic modular form for the full modular group $\Gamma_1:={\rm PSL}_2 (\Z)$.
This interesting connection was first discovered in the case of depth 2 by Zagier~\cite[\S8]{Z} (see also \cite[\S3]{Z1}), and then investigated in depth by Gangl, Kaneko and Zagier~\cite{GKZ}. 
We will investigate this connection for arbitrary depths through the study of totally odd MZVs which are MZVs at the sequences indexed by odd integers greater than 1, modulo all MZVs of lower depth and the ideal generated by $\zeta(2)$.

The totally odd MZV was first considered by Francis Brown~\cite[\S10]{B} in his study of the depth-graded motivic MZV
\[ \zeta^{\m}_{\depth}(n_1,\ldots,n_r) \]
which is the motivic MZV $\zeta^{\m}(n_1,\ldots,n_r)$ modulo lower depth.
He conjectures that the universal enveloping algebra of the Lie subalgebra $\mathfrak{g}^{\rm odd}$ of the depth-graded motivic Lie algebra $\mathfrak{dg}^\m$ (see \cite[\S4]{B}) generated by the canonical elements $\sigma_{2i+1} \ (i\ge1)$ of degree $-2i-1$ and of depth $1$ is dual to the space ${\rm gr}^{\depth}\mathcal{A}^{\rm odd}$ spanned by all totally odd motivic MZVs.
Assuming his conjecture \cite[Conjecture~4]{B} which recasts the motivic version of the Broadhurst-Kreimer conjecture~\cite{BK} as a statement of the homology of $\mathfrak{dg}^\m$, one can compute the dimension of $\mathfrak{g}^{\rm odd}$, and hence a conjectural dimension formula for totally odd (motivic) MZVs is obtained.
This dimension formula is called the uneven part of the (motivic) Broadhurst-Kreimer conjecture (see Conjecture~\ref{2_1} and \cite[Conjecture~5]{B}).

We wish to study the uneven part of the (motivic) Broadhurst-Kreimer conjecture.
An important framework has been done by Brown \cite[\S10]{B}.
This uses the notion of his celebrated paper~\cite{B1}: Brown's operator $D_m$, which is an infinitesimal version of the motivic coaction on the motivic MZVs, preserves the depth filtration $\depth$ of motivic MZVs (we will give a brief review of his notion in \S2.3).
Using Brown's operator, we define a certain matrix $C_{N,r}$ (Definition~2.3).
It is highly expected by definition that all relations among totally odd (motivic) MZVs of weight $N$ and depth $r$ are obtained from right annihilators of $C_{N,r}$, which is compatible with \cite[Conjecture~4]{B} and not known for $r\ge4$ (see Conjecture~\ref{comb}).
As the size of the square matrix $C_{N,r}$ coincides with the number of totally odd MZVs of weight $N$ and depth $r$, we are led to the following conjecture.
\begin{conjecture}\label{1_1}
The rank of the matrix $C_{N,r}$ is equal to the dimension of the $\Q$-vector space spanned by all totally odd (motivic) MZVs of weight $N$ and depth $r$.
\end{conjecture}

This paper examines the matrix $C_{N,r}$, and makes an attempt to compute the rank of them.
The coefficients of $C_{N,r}$ are computable, but it is hard to give an exact value of $\rank C_{N,r}$ in general.
Some information about it can be obtained by looking at left annihilators of $C_{N,r}$: for example, all left annihilators of $C_{N,2}$ are characterised by restricted even period polynomials of degree $N-2$, which was first shown by Baumard and Schneps~\cite{BS}.
Here the restricted even period polynomial of degree $N-2$ is defined as an even, homogeneous polynomial $p(x_1,x_2)\in \Q[x_1,x_2]_{(N-2)}$ of degree $N-2$ such that $p(x_1,0)=0$ and
\begin{equation}\label{eq1_1}   
p(x_1,x_2) - p(x_2-x_1,x_2)+p(x_2-x_1,x_1)=0.
\end{equation}
The important fact about this polynomial is that the dimension of its associated vector space over $\Q$ coincides with the dimension of the space of cusp forms for $\Gamma_1$, which follows from, known as Eichler-Shimura-Manin correspondence, the results of \cite[\S1.1]{KZ} and \cite[\S5]{GKZ}.
Therefore, since the matrix $C_{N,r}$ is square, the above characterisation provides a lower bound of the dimension of the space of right annihilators of $C_{N,2}$, and also an upper bound of $\rank C_{N,2}$.
It is vital to note that relating with period polynomials is actually our fundamental principle of computing an upper bound of $\rank C_{N,r}$.

Our first goal (\S3) is to relate the restricted even period polynomials and a certain matrix $E_{N,r}$ (Definition~\ref{3_2}). 
The square matrix $E_{N,r}$ is a right factor of the matrix $C_{N,r}$ (Proposition~\ref{3_3}), which shows
\begin{equation}\label{ineq}
\dim_{\Q} \ker E_{N,r}\le \dim_{\Q} \ker C_{N,r}.
\end{equation}
We prove that there is an injection from the space of restricted even period polynomials to the space of left annihilators of $E_{N,r}$, and, as a consequence, a lower bound of $\dim_{\Q} \ker E_{N,r}$ is obtained.
More precisely, let ${\bf W}_{N,r}$ be the space of even, homogeneous polynomials $p(x_1,\ldots,x_r)\in \Q[x_1,\ldots,x_r]_{(N-r)}$ of degree $N-r$ such that $p(x_1,\ldots,x_r)|_{x_i=0}=0$ for $1\le i\le r$ and 
\begin{equation*}
 p(x_1,\ldots,x_r) - p(x_2-x_1,x_2,\underbrace{x_3,\ldots,x_r}_{r-2})+p(x_2-x_1,x_1,\underbrace{x_3,\ldots,x_r}_{r-2})=0.
\end{equation*}
Each element in $\W_{N,2}$ is just a restricted even period polynomial, and the defining relation in the space $\W_{N,r}$ only uses the relation \eqref{eq1_1}, so that the dimension of the space of $\W_{N,r}$ is easily obtained.
Our first main result is as follows (Proposition~\ref{3_5} and Theorem~\ref{3_7}).
\begin{theorem}\label{1_3}
(1) There is a one-to-one correspondence between the space $\W_{N,2}$ and the space of left annihilators of the matrix $E_{N,2}(=C_{N,2})$.\\
(2) For $r\ge3$, there is an injection from the space $\W_{N,r}$ to the space of left annihilators of the matrix $E_{N,r}$.
\end{theorem}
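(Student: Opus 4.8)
The plan is to realise both sides as spaces of polynomials and to translate the matrix equation ``$v$ is a left annihilator'' into the functional equations defining $\W_{N,r}$. Index the rows (equivalently the columns, the matrices being square) of $E_{N,r}$ and $C_{N,r}$ by the totally odd tuples $(n_1,\ldots,n_r)$ with each $n_i\ge 3$ odd and $n_1+\cdots+n_r=N$, and attach to a row vector $v=(v_{(n_1,\ldots,n_r)})$ the homogeneous polynomial
\[
 P_v(x_1,\ldots,x_r)=\sum_{(n_1,\ldots,n_r)} v_{(n_1,\ldots,n_r)}\, x_1^{n_1-1}\cdots x_r^{n_r-1}.
\]
Since each $n_i-1$ is a positive even integer, $P_v$ lies in $\Q[x_1,\ldots,x_r]_{(N-r)}$, is even in every variable, and vanishes when any $x_i=0$; conversely every such polynomial arises from a unique $v$. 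This gives a linear isomorphism between row vectors and the ambient space in which $\W_{N,r}$ sits, and the map to be constructed is simply $p\mapsto(\text{coefficient vector of }p)$, so injectivity is automatic. The whole content is therefore to show that the coefficient vector of an element of $\W_{N,r}$ left-annihilates $E_{N,r}$.

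For part (1) I would first note that, by the definitions, $\W_{N,2}$ is exactly the space of restricted even period polynomials of degree $N-2$. It then remains to match the defining relation \eqref{eq1_1} with the equation $vC_{N,2}=0$ (recall $E_{N,2}=C_{N,2}$). Expanding $p(x_2-x_1,x_2)$ and $p(x_2-x_1,x_1)$ by the binomial theorem and collecting the coefficient of each monomial $x_1^{a}x_2^{b}$ turns \eqref{eq1_1} into a system of linear relations on the coefficients of $p$; comparing this system with Definition~2.3 shows that its coefficient matrix is precisely $C_{N,2}$. Hence $p\mapsto v$ carries $\W_{N,2}$ bijectively onto the left annihilators of $C_{N,2}$, which is the Baumard--Schneps characterisation recast in the present language.

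For part (2) the decisive observation is that the relation defining $\W_{N,r}$ alters only the first two variables, leaving $x_3,\ldots,x_r$ untouched. Thus, writing $p\in\W_{N,r}$ as $\sum p_{(n_3,\ldots,n_r)}(x_1,x_2)\,x_3^{n_3-1}\cdots x_r^{n_r-1}$, each slice $p_{(n_3,\ldots,n_r)}(x_1,x_2)$ is an even homogeneous polynomial vanishing at $x_1=0$ and $x_2=0$ and satisfying \eqref{eq1_1}, i.e. a restricted even period polynomial. By Proposition~\ref{3_3}, $E_{N,r}$ is a simple factor of $C_{N,r}$ governed by the operation on the leftmost pair of coordinates, so the condition $vE_{N,r}=0$ should decouple over the spectator indices $(n_3,\ldots,n_r)$ and reduce, slice by slice, to the two-variable condition of part (1). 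Applying part (1) to each slice then yields $vE_{N,r}=0$, giving the desired injective map $\W_{N,r}\to\{\text{left annihilators of }E_{N,r}\}$.

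The main obstacle will be extracting from Definition~\ref{3_2} and Proposition~\ref{3_3} the precise structure of $E_{N,r}$ that makes the spectator coordinates decouple, so that $vE_{N,r}=0$ really is controlled column-by-column by the period relation in $(x_1,x_2)$ alone; this is where the explicit binomial coefficients in the definition of the matrix must be matched against the substitutions appearing in \eqref{eq1_1}. The same analysis should explain why only injectivity, and not a bijection, is claimed for $r\ge 3$: the left annihilators of $E_{N,r}$ need not all be of the uniform ``period-in-the-first-two-variables'' shape produced by $\W_{N,r}$, since the factorisation of $C_{N,r}$ may impose fewer constraints linking the spectator variables than $\W_{N,r}$ does, so the image of $\W_{N,r}$ can be a proper subspace of the full left kernel.
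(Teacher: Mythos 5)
There is a genuine gap in part (2): the map you propose is not the right one, and the statement you would need --- that the coefficient vector $\pi_1(p)$ of $p\in\W_{N,r}$ itself left-annihilates $E_{N,r}$ --- is false for $r\ge3$. The decoupling you invoke does not happen. By Lemma~\ref{3_1}, the entry $\ec$ contains a sum over \emph{all} adjacent positions $i=1,\ldots,r-1$, because the Ihara action $x_1^{m_1-1}\cp(x_1^{m_2-1}\cdots x_{r-1}^{m_r-1})$ inserts the new variable at every slot; in the paper's notation $E_{N,r}$ corresponds to the operator $1+\sigma_r$ with $\sigma_r=\sigma_r^{(1)}+\cdots+\sigma_r^{(r-1)}$, whereas the defining relation of $\W_{N,r}$ only gives $p\big|(1+\sigma_r^{(1)})=0$. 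Hence $p\big|(1+\sigma_r)=p\big|(\sigma_r^{(2)}+\cdots+\sigma_r^{(r-1)})$, which is nonzero in general, so the spectator variables $x_3,\ldots,x_r$ are \emph{not} inert and $\pi_1(p)\notin\ker E_{N,r}$. The paper's actual map is $p\mapsto F_{N,r}(\pi_1(p))$ with $F_{N,r}=E_{N,r}-I_{N,r}$ (i.e.\ $p\mapsto \pi_1(p\big|\sigma_r)$), a twist the author says was found only by numerical experiment; proving that this lands in $\ker E_{N,r}$ requires the quadratic identities $p\big|(\sigma_r^{(j)}\sigma_r^{(i)}+\sigma_r^{(i)}\sigma_r^{(j-1)})=0$ for $r-1\ge i\ge j\ge2$, which encode how the non-spectator interactions cancel against each other, and injectivity of $F_{N,r}\circ\pi_1$ then needs its own argument (a symmetrization over permutations of $\{2,\ldots,r\}$) since $F_{N,r}$ is far from being the identity. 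Your closing remark about why only injectivity is claimed is also off: the issue is not that $\W_{N,r}$ imposes extra constraints on the image, but that surjectivity of this particular (nontrivially twisted) map onto $\ker E_{N,r}$ is an open conjecture.

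For part (1) your outline is closer to the paper, but it still understates two points: the expansion of $p(x_1,x_2)-p(x_2-x_1,x_2)+p(x_2-x_1,x_1)$ produces monomials $x_1^{n_1-1}x_2^{n_2-1}$ with $(n_1,n_2)\notin\Set_{N,2}$, and one must show that this spurious part equals $\tfrac12\big(p(x_2-x_1,x_1)-p(x_2-x_1,x_2)-p(x_2+x_1,x_1)+p(x_2+x_1,x_2)\big)$, which vanishes for $p\in\W_{N,2}$; and the converse inclusion $\ker E_{N,2}\subset\pi_1(\W_{N,2})$ is not a formal matching of coefficients but uses the ${\rm PGL}_2(\Z)$-action and a triangularity argument. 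These can be repaired along the paper's lines, but part (2) needs a different map, not just more care.
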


Theorem~\ref{1_3} (1) is the result of Baumard and Schneps, which will be reproved in \S3.3 for completeness.
The map in the statement of Theorem~\ref{1_3} (2) is quite simple and guessed from numerical experiments (so we have no idea to explain this map theoretically).
It is interesting to note that this map is conjecturally surjective, i.e., all left annihilators of $E_{N,r}$ can be characterised by restricted even period polynomials.
We were not able to prove this conjecture.

The second main result of this paper is a new upper bound of $\rank C_{N,4}$ which is the predicted bound (see \eqref{eq2_5}).
We remark that $\rank C_{N,2}$ is known (Theorem~\ref{1_3} (1)), and it is easy to deduce the predicted upper bound of $\rank C_{N,3}$ which we do not discuss in this paper.

From \eqref{ineq} and Theorem~\ref{1_3} one obtains a lower bound of $\dim_{\Q} \ker C_{N,r}$, but for $r\ge3$ it is not the predicted bound.
To obtain more elements in the space $\ker C_{N,r}$, we use an algebraic structure of the space of totally odd MZVs.
This space forms the commutative $\Q$-algebra with respect to Hoffman's harmonic product (modulo lower depth).
This algebraic structure yields an injection from the space of right annihilators of $E_{N',r'}$ for $(N',r')<(N,4)$ to the space of right annihilators of $C_{N,4}$ (Proposition~\ref{4_2}).
Using these results, we obtain the predicted upper bound of $\rank C_{N,4}$.
\begin{theorem}\label{1_4}
We have
\[ \sum_{N>0} \rank C_{N,4} x^N \le \odd(x)^4-3\odd(x)^2 \cusp(x) +\cusp(x)^2,\]
where $\odd(x)=\frac{x^3}{1-x^2},\ \cusp(x)=\frac{x^{12}}{(1-x^4)(1-x^6)}$ and $\sum_{N>0} a_N x^N \le \sum_{N>0} b_N x^N$ means $a_N\le b_N$ for all $N>0$.
\end{theorem}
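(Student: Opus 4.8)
The plan is to convert the rank bound into a lower bound on the dimension of the kernel (right annihilator space) of $C_{N,4}$ and then to manufacture enough explicit annihilators. Since $C_{N,4}$ is a \emph{square} matrix whose rows and columns are both indexed by the totally odd sequences $(n_1,n_2,n_3,n_4)$ of weight $N$ (each $n_i$ odd and $>1$), its size is $[\odd(x)^4]_N$, where $[f(x)]_N$ denotes the coefficient of $x^N$. The rank--nullity identity then gives $\sum_{N>0}\rank C_{N,4}\,x^N=\odd(x)^4-\sum_{N>0}d_N x^N$, with $d_N=\dim_\Q\ker C_{N,4}$. Thus the theorem is equivalent to the lower bound $\sum_{N>0} d_N x^N\ge 3\odd(x)^2\cusp(x)-\cusp(x)^2$, and it suffices to exhibit, inside the right annihilators of $C_{N,4}$, a family spanning a space of at least this dimension for each $N$.

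I would produce annihilators from three sources, each of which, counted without regard to dependencies, contributes $\odd(x)^2\cusp(x)$. First, by Proposition~\ref{1_2} every right annihilator of $E_{N,4}$ is a right annihilator of $C_{N,4}$; since $E_{N,4}$ is square, its left and right annihilator spaces have equal dimension, so Theorem~\ref{1_3}(2) supplies at least $\dim_\Q\W_{N,4}$ of them, and the dimension of $\W_{N,r}$---readily computed because its defining relation couples only $x_1,x_2$, so that $\W_{N,r}$ is spanned by restricted even period polynomials in $(x_1,x_2)$ times even spectator monomials in $x_3,\dots,x_r$---has generating function $\odd(x)^{r-2}\cusp(x)$, i.e.\ $\odd(x)^2\cusp(x)$ for $r=4$. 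Second and third, using the harmonic product on $\bigoplus_{N,r}\Zsp_{N,r}^{\rm odd}$ together with Corollary~\ref{4_2}, I multiply a right annihilator of $E_{N',2}$ (there are exactly $[\cusp(x)]_{N'}$ of these by Theorem~\ref{1_3}(1)) by a totally odd MZV of depth $2$, and a right annihilator of $E_{N',3}$ (at least $[\odd(x)\cusp(x)]_{N'}$ of these) by a totally odd MZV of depth $1$; the resulting coefficient vectors are right annihilators of $C_{N,4}$, and summing over the weight splittings these two families contribute $\cusp(x)\odd(x)^2$ and $\odd(x)\cusp(x)\cdot\odd(x)=\odd(x)^2\cusp(x)$ respectively.

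The crux is then to control the linear dependencies among these $3\,[\odd^2\cusp]_N$ generating relations: I must show that the space of such dependencies has dimension \emph{at most} $[\cusp(x)^2]_N$, which yields $d_N\ge 3[\odd^2\cusp]_N-[\cusp^2]_N$ and hence the theorem. I expect this to be the main obstacle. The harmonic product is commutative, and this commutativity is precisely what forces genuine dependencies when two cusp forms (period polynomials) interact---exactly accounting for the $\cusp(x)^2$ term, whose lowest contribution first appears at weight $24$ from the pair $(12,12)$. To prove that no further dependencies occur, I would pass to the associated graded (top-depth) part, where each harmonic product becomes the stuffle sum of totally odd sequences, and invoke the small lemma on the shuffle algebra to show that the relevant products of totally odd MZVs are independent enough to pin the dependency space down to the commutativity relations alone. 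Turning this heuristic into a uniform bound of $[\cusp^2]_N$ on the whole dependency space---rather than merely estimating the pairwise intersections of the three families one at a time---is the delicate step, and is where the shuffle-algebra lemma must do the real work.
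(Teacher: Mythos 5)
Your setup is the same as the paper's: reduce to a lower bound on $\dim_{\Q}\ker \tr{C_{N,4}}$ via rank--nullity for the square matrix $C_{N,4}$, and generate annihilators from three sources, namely $\ker\tr{E_{N,4}}$, the products $\stuffle_p(\ker\tr{E_{N-p,3}})$, and the products $\stuffle_{p_1,p_2}(\ker\tr{E_{N-p,2}})$. But the entire content of the theorem lies in the step you defer --- showing that the dependency space among these $3[\odd(x)^2\cusp(x)]_N$ generators has dimension at most $[\cusp(x)^2]_N$ --- and your proposed mechanism for it is wrong. Commutativity of the product cannot account for the $\cusp(x)^2$ term: identifying $w_1\,\sh\,w_2$ with $w_2\,\sh\,w_1$ when both factors are period-polynomial vectors only collapses ordered pairs to unordered ones, which kills $\tfrac12\bigl(\cusp(x)^2-\cusp(x^2)\bigr)$ dimensions, and at the first relevant weight $N=24$ this is $1-1=0$ while $[\cusp(x)^2]_{24}=1$. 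The genuine source of the $\cusp(x)^2$ redundancy is a cross-family containment: by Lemma~\ref{4_1}~(ii), if the multiplier vector $(a_{p_1,p_2})$ lies in $\ker\tr{E_{p,2}}$ then $\sum a_{p_1,p_2}\stuffle_{p_1,p_2}(v)$ is killed by $\tr{E_{N,4}^{(3)}}\cdot\tr{E_{N,4}}$ and hence already lies in $\ker\tr{E_{N,4}}\oplus\bigoplus_p\stuffle_p(\ker\tr{E_{N-p,3}})$; summing $\dim\ker E_{p,2}\cdot\dim\ker E_{N-p,2}$ over $p$ gives exactly $[\cusp(x)^2]_N$. At $N=24$ this is the self-product of the weight-$12$ period-polynomial vector, which commutativity sees not at all.

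The paper avoids your "bound the dependencies afterwards" formulation entirely: it restricts the depth-$2$ multipliers to ${\rm Im}\,\tr{E_{p,2}}$ from the outset, so that the third family $\mathsf{A}_N^{(p)}$ contributes only $\rank E_{p,2}\cdot\dim_{\Q}\ker\tr{E_{N-p,2}}$ generators, and then proves that the resulting sum of the three families is \emph{direct}. The directness is established by pushing elements through $\tr{E_{N,4}}$ and $\tr{E_{N,4}^{(3)}}$ and using the block-diagonal decompositions \eqref{eq4_3} of $\ker\tr{E_{N,4}^{(3)}}$ and $\ker\tr{E_{N,4}^{(2)}}$ together with the injectivity of $\inj$ and $\stuffle$; the shuffle-algebra lemma (Lemma~\ref{4_3}, proved via Lyndon words) is used only to compute $\dim_{\Q}\mathsf{A}_N^{(p)}$, not to control dependencies between the families. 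To complete your argument you would need to supply precisely this analysis (or an equivalent one handling all cross-dependencies simultaneously); as written, the proposal identifies the correct generators but does not prove the inequality.
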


Theorem~\ref{1_4} gives support to the uneven part of the motivic Broadhurst-Kreimer conjecture of depth 4.
In fact, if Conjecture~\ref{1_1} (or equivalently Conjecture~\ref{comb}) for $r=4$ is true, then one obtains the predicted upper bound of the dimension of the $\Q$-vector space spanned by totally odd MZVs of depth 4. 
It is our project in progress to give a solution of Conjecture~\ref{1_1} for $r=4$.

The contents are as follows.
In Section 2, we state the uneven part of the Broadhurst-Kreimer conjecture, and give a brief review of Brown's works in \cite{B}.
Section 3 studies the matrix $E_{N,r}$. 
We shall give a proof of Theorem~\ref{1_3}.
Section 4 is devoted to proving Theorem~\ref{1_4}.
This proof needs a certain lemma about the shuffle algebra, which will be proven in the Appendix.

\section{Preliminaries}

\subsection{Totally odd multiple zeta values conjecture}

The MZV is defined for ${\bf n} =(n_1,\ldots,n_r) \in \Z_{>0}^r$ with $n_r\ge2$ by 
\[ \zeta({\bf n})=\zeta(n_1,\ldots,n_r) = \sum_{0<k_1<\cdots<k_r} \frac{1}{k_1^{n_1}\cdots k_r^{n_r}}. \] 
As usual, we call $n_1+\cdots+n_r \ (=:{\rm wt}({\bf n}))$ the weight and $r\ (=:{\rm dep}({\bf n}))$ the depth, and $1\in\Q$ is regarded as the unique MZV of weight 0 and depth 0.
Let $\Zsp$ be the MZV algebra $\bigoplus_{N\ge0}\Zsp_N$, where $\Zsp_N$ is the $\Q$-vector space spanned by all MZVs of weight $N$, and $\depth$ the depth filtration on $\Zsp$:
\[ \depth_0 \Zsp =\Q \subset \depth_1\Zsp \subset \cdots \subset \depth_r\Zsp := \langle \zeta({\bf n}) \mid {\rm dep}({\bf n})\le r \rangle_{\Q} \subset \cdots .\]
The MZV algebra becomes a filtered algebra with $\depth$.
Let $\Zsp_{N,r}$ be the $\Q$-vector space of the weight $N$ and depth $r$ part of the bigraded $\Q$-algebra ${\rm gr}^{\depth} \big(\Zsp/\zeta(2)\Zsp\big)=\bigoplus_{N,r\ge0} \Zsp_{N,r}$ and $\zeta_{\depth}({\bf n})$ denote the equivalence class of $\zeta({\bf n})$ of weight $N$ and depth $r$ in $\Zsp_{N,r}$, called the depth-graded MZV.
We notice that the $\Q$-vector space $\Zsp_{N,r}=\depth_r\Zsp_N\big/ \big( \depth_{r-1}\Zsp_N + \depth_r\Zsp_N\cap \zeta(2)\Zsp \big)$ is spanned by all depth-graded MZVs of weight $N$ and depth $r$.
Let us call $\zeta_{\depth}(n_1,\ldots,n_r)$ the totally odd MZV when all $n_i$ are odd $(\ge3)$. 
The $\Q$-vector subspace of $\Zsp_{N,r}$ spanned by all totally odd MZVs of weight $N$ and depth $r$ is denoted by
\[ \Zsp_{N,r}^{\rm odd} = \big\langle \zeta_{\depth} ({\bf n})\in \Zsp_{N,r} \mid {\bf n}\in \Set_{N,r} \big\rangle_{\Q}, \]
where $\Set_{N,r}$ is the set of totally odd indices of weight $N$ and depth $r$:
\[ \Set_{N,r} = \{ (n_1,\ldots,n_r)\in \Z^r \mid n_1+\cdots+n_r=N,\ n_1,\ldots,n_r\ge3:{\rm odd} \}. \]
We set $\Zsp_{0,0}^{\rm odd}=\Q$.
Notice that the number of elements of the set $\Set_{N,r}$ obviously gives a trivial upper bound $\dim_{\Q} \Zsp_{N,r}^{\rm odd} \le |\Set_{N,r}|$, and hence $\Zsp_{N,r}^{\rm odd}=\{0\}$ whenever $N\not\equiv r \pmod{2}$.
We now state the uneven part of the Broadhurst-Kreimer conjecture.
\begin{conjecture}\label{2_1}{\rm (\cite[Eq.~(10.4)]{B})}
The generating function of the dimension of the space $\Zsp_{N,r}^{\rm odd}$ is given by
\begin{equation}\label{eq2_1}
\sum_{N,r\ge0} \dim_{\Q} \Zsp_{N,r}^{\rm odd} x^N y^r \stackrel{?}{=} \frac{1}{1-\odd (x) y +\cusp (x) y^2},
\end{equation}
where $\odd(x)=\frac{x^3}{1-x^2}=x^3+x^5+x^7+\cdots$ and $\cusp (x)= \frac{x^{12}}{(1-x^4)(1-x^6)}=x^{12}+x^{16}+x^{18}+\cdots$.
\end{conjecture}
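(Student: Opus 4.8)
The plan is to reduce Conjecture~\ref{2_1} to an exact determination of $\rank C_{N,r}$ together with a matching lower bound, organised around the recursion satisfied by the right-hand side. Expanding
\[ \frac{1}{1-\odd(x)y+\cusp(x)y^2}=\sum_{r\ge0}D_r(x)\,y^r,\qquad D_r=\odd(x)D_{r-1}-\cusp(x)D_{r-2}, \]
with $D_0=1$ and $D_1=\odd(x)$, the conjecture asserts that $\sum_N\dim_\Q\Zsp_{N,r}^{\rm odd}\,x^N=D_r(x)$ for every $r$. First I would invoke Corollary~\ref{2_4} to get $\dim_\Q\Zsp_{N,r}^{\rm odd}\le\rank C_{N,r}$, and recall that by Proposition~\ref{comb} the right annihilators of $C_{N,r}$ account for \emph{all} relations among totally odd motivic MZVs, so that in the motivic setting this inequality is an equality. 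Since $C_{N,r}$ is square of size $|\Set_{N,r}|$ and $\sum_N|\Set_{N,r}|\,x^N=\odd(x)^r$, the desired upper bound $\sum_N\rank C_{N,r}\,x^N\le D_r(x)$ is equivalent to exhibiting, in each weight $N$, a space of annihilators of dimension at least the coefficient of $x^N$ in $\odd(x)^r-D_r(x)$.

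To construct these annihilators I would combine two mechanisms. First, Propositions~\ref{1_2} and~\ref{3_3} reduce the problem to $E_{N,r}$, and Theorem~\ref{1_3} embeds the period-polynomial space $\W_{N,r}$ into the left annihilators of $E_{N,r}$, which bound $\rank E_{N,r}$, and hence $\rank C_{N,r}$, from above; in depth $2$ this is a bijection (Theorem~\ref{1_3}(1)), and because $\dim_\Q\W_{N,2}$ equals the dimension of the space of cusp forms for $\Gamma_1$ by Eichler--Shimura--Manin, it already yields $\rank C_{N,2}$ exactly, with generating function $\odd(x)^2-\cusp(x)=D_2(x)$. For $r\ge3$ the period polynomials alone are insufficient, so second I would exploit the commutative $\Q$-algebra structure on $\bigoplus_{N,r}\Zsp_{N,r}^{\rm odd}$ under Hoffman's harmonic product: multiplying a lower-depth relation coming from a right annihilator of $C_{N',r'}$ by totally odd MZVs of complementary weight and depth produces new relations whose coefficient vectors, as in Corollary~\ref{4_2} for $r=4$, are again right annihilators of $C_{N,r}$. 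The recursion $D_r=\odd(x)D_{r-1}-\cusp(x)D_{r-2}$ is precisely the combinatorial shadow of this construction: the factor $\odd(x)$ records multiplication by a single odd zeta value that raises the depth by one, while $\cusp(x)$ records the period-polynomial (cusp-form) input entering through the depth-$2$ building blocks. The crux of this step is to show that the annihilators so produced are linearly independent and exhaust the count, which is exactly what Theorem~\ref{1_4} and Corollary~\ref{1_5} achieve for $r=4$.

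The hard part will be completeness and the lower bound. The first difficulty is that Theorem~\ref{1_3}(2) is only known to be injective, and its conjectural surjectivity --- that every left annihilator of $E_{N,r}$ arises from a restricted even period polynomial --- is unproven, so beyond depth $4$ one cannot yet certify that the period-polynomial-plus-harmonic-product annihilators fill the entire kernel; controlling the redundancies among the harmonic-product relations and proving the count is \emph{exactly} the coefficient of $\odd(x)^r-D_r(x)$, rather than merely bounding it, is the principal combinatorial obstacle, and extending the depth-$4$ bookkeeping of Theorem~\ref{1_4} to general $r$ is where I expect the argument to be most delicate. The second and deeper difficulty is the matching lower bound: although the totally odd \emph{motivic} MZVs have dimension exactly $\rank C_{N,r}$, transferring this to genuine MZVs through the period map requires independence statements that lie beyond present transcendence technology, so by this method only the upper bound of Conjecture~\ref{2_1} appears to be within reach in general.
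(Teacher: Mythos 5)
The statement you were asked to prove is Conjecture~\ref{2_1}, and the paper contains no proof of it: it is stated with a question mark and remains open even in depth $2$, where the lower bound $\dim_{\Q}\Zsp_{N,2}^{\rm odd}\ge\rank C_{N,2}$ would already require transcendence results far beyond what is known. Your write-up is therefore not a proof, but to your credit you do not claim it is one; what you give is an accurate reconstruction of the paper's actual strategy for the partial results. Your two mechanisms are exactly the paper's: the period-polynomial annihilators of $E_{N,r}$ (Proposition~\ref{3_5}, Theorem~\ref{3_7}) giving $\rank C_{N,2}$ exactly and an upper bound in higher depth, and the harmonic/shuffle multiplication of lower-depth annihilators (Lemma~\ref{4_1}, Corollary~\ref{4_2}) supplying the remaining relations in depth $4$, with the recursion $D_r=\odd(x)D_{r-1}-\cusp(x)D_{r-2}$ as the combinatorial shadow --- this is precisely the content of Theorem~\ref{1_4} and the closing Remark of Section~4. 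The two obstructions you name are the genuine ones: the unproved surjectivity of the map in Theorem~\ref{1_3}(2) (equivalently, that $\dim_{\Q}\ker E_{N,r}$ is exactly the coefficient of $x^N$ in $\cusp(x)\odd(x)^{r-2}$), which blocks the upper bound beyond $r=4$; and the period-map lower bound, which is inaccessible in every depth. One small caution: your remark that ``in the motivic setting this inequality is an equality'' is correct for $\dim_{\Q}{\rm gr}_r^{\depth}\mathcal{H}_N^{\rm odd}=\rank C_{N,r}$ by Proposition~\ref{comb}, but even granting the full motivic statement the conjecture for $\Zsp_{N,r}^{\rm odd}$ still requires injectivity of the period map on the relevant graded pieces, so the motivic reduction does not by itself dispose of the lower-bound half.
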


Let us give a few examples. 
By expanding the right-hand side of \eqref{eq2_1} at $y=0$, one can derive
\begin{align*}
& \sum_{N>0} \dim_{\Q} \Zsp_{N,2}^{\rm odd} x^N\stackrel{?}{=} \odd(x)^2-\cusp(x),\ \sum_{N>0} \dim_{\Q} \Zsp_{N,3}^{\rm odd} x^N\stackrel{?}{=} \odd(x)^3-2\odd(x) \cusp(x), \\
& \sum_{N>0} \dim_{\Q} \Zsp_{N,4}^{\rm odd} x^N\stackrel{?}{=} \odd(x)^4-3\odd(x)^2 \cusp(x) +\cusp(x)^2
\end{align*}
and so on.
Since $\odd (x)^r=\sum_{N>0}|\Set_{N,r}|x^N$ and the coefficient of $x^N$ in $\cusp (x)$ coincides with the dimension of the space of cusp forms of weight $N$ for $\Gamma_1$, Conjecture~\ref{2_1} suggests that all linear relations among totally odd MZVs relate to cusp forms.
In the cases of $r=2$ and $3$, by the results of Goncharov~\cite[Theorems~2.4 and 2.5]{G} (see also \cite[Proposition~18]{IKZ}, \cite[Theorem~19]{IO}) on the number of algebra generators of the depth-graded $\Q$-algebra ${\rm gr}^{\depth}\big( \Zsp\big/ \zeta(2)\Zsp \big)$, we find
\[ \sum_{N>0} \dim_{\Q} \Zsp_{N,2}^{\rm odd} x^N \le \odd (x)^2-\cusp(x)\quad \mbox{and}\quad \sum_{N>0} \dim_{\Q} \Zsp_{N,3}^{\rm odd} x^N \le \odd(x)^3-2\odd(x)\cusp(x).\]

{\it Remark}.
To obtain the above inequalities, we have used $\dim_{\Q}\Zsp_{N,r}^{\rm odd} \le \dim_{\Q}\Zsp_{N,r}$.
For $r=2$ it follows from the result of Gangl, Kaneko and Zagier~\cite[Theorem~2]{GKZ} that $\Zsp^{\rm odd}_{N,2}=\Zsp_{N,2}$.
However we do not know $\Zsp_{N,3}^{\rm odd} \stackrel{?}{=} \Zsp_{N,3}$.

\subsection{Notations}
To clarify the meaning of the left (or right) annihilator of our matrices described in the Introduction, we fix notations.
For integers $N,r$ with $|\Set_{N,r}|>0$, the notation 
$$M=\left( m\tbinom{m_1,\ldots,m_r}{n_1,\ldots,n_r} \right)_{\begin{subarray}{c} (m_1,\ldots,m_r)\in \Set_{N,r}\\ (n_1,\ldots,n_r)\in \Set_{N,r}\end{subarray}}$$
means that $M$ is a $|\Set_{N,r}|\times |\Set_{N,r}|$ matrix with $m\tbinom{m_1,\ldots,m_r}{n_1,\ldots,n_r}\in\Z$ in the $((m_1,\ldots,m_r)$, $(n_1,\ldots,n_r))$ entry (i.e. rows and columns are indexed by $(m_1,\ldots,m_r)$ and $(n_1,\ldots,n_r)$ in the set $\Set_{N,r}$ respectively).
For convenience we regard $M$ as an empty matrix when $|\Set_{N,r}|=0$ (i.e. $\rank M=0$).
Let us denote by $\V_{N,r}$ the $|\Set_{N,r}|$-dimensional vector space over $\Q$ of row vectors $(a_{n_1,\ldots,n_r})_{(n_1,\ldots,n_r)\in \Set_{N,r}}$ indexed by totally odd indices $(n_1,\ldots,n_r)\in \Set_{N,r}$ with rational coefficients:
\[ \V_{N,r} = \{ (a_{n_1,\ldots,n_r})_{(n_1,\ldots,n_r)\in \Set_{N,r}} \mid a_{n_1,\ldots,n_r}\in \Q \}.\]
We identify the matrix $M$ with its induced linear map on $\V_{N,r}$
\begin{align*}
M : \V_{N,r}&\longrightarrow \V_{N,r}\\
 v&\longmapsto v\cdot M,
\end{align*}
so that, for $v=(a_{n_1,\ldots,n_r})_{(n_1,\ldots,n_r)\in \Set_{N,r}} \in \V_{N,r}$
\[ M\big(v \big) = \Big( \sum_{(m_1,\ldots,m_r)\in \Set_{N,r}} a_{m_1,\ldots,m_r} m\tbinom{m_1,\ldots,m_r}{n_1,\ldots,n_r} \Big)_{(n_1,\ldots,n_r)\in \Set_{N,r}}. \]
It is clear that a row vector $v=(a_{n_1,\ldots,n_r})_{(n_1,\ldots,n_r)\in \Set_{N,r}} \in \V_{N,r}$ satisfies $M(v)=0$ if and only if $\sum_{(m_1,\ldots,m_r)\in \Set_{N,r}} a_{m_1,\ldots,m_r} m\binom{m_1,\ldots,m_r}{n_1,\ldots,n_r} =0$ for all $(n_1,\ldots,n_r)\in \Set_{N,r}$.
Hereafter, we use the notion $v\in \ker M\subset \V_{N,r}$ (resp. $\ker\tr M\subset\V_{N,r}$) instead of saying $v$ (resp. $\tr v$) is a left (resp. right) annihilator of the matrix $M$, where we understand $\tr M= \left( m{\tbinom{n_1,\ldots,n_r}{m_1,\ldots,m_r}} \right)_{\begin{subarray}{c} (m_1,\ldots,m_r)\in \Set_{N,r}\\ (n_1,\ldots,n_r)\in \Set_{N,r}\end{subarray}}$.

\subsection{Linear relations among totally odd MZVs}

The motivic multiple zeta value $\zeta^{\m}(n_1,\ldots,n_r)$ plays a key role in detecting Conjecture~\ref{2_1}.
This is an element of a certain $\Q$-algebra $\mathcal{H}=\bigoplus_{N\ge0}\mathcal{H}_N$ constructed by Brown~\cite[Definition~2.1]{B1}, which uses an idea of Goncharov~\cite{G1}, and it has the period map $per:\mathcal{H}\rightarrow \R$ mapping the elements $\zeta^{\m}(n_1,\ldots,n_r)$ to the real number $\zeta(n_1,\ldots,n_r)$.
We remark that $\zeta^\m(2)$ is not treated to be zero in $\mathcal{H}$.
Denote by $\depth$ the depth filtration  (see \cite[Section 4]{B})
\[ \depth_r\mathcal{H} = \langle \zeta^{\m} ({\bf n}) \mid {\rm dep}({\bf n})\le r \rangle_{\Q},\] 
and by $\zeta_{\depth}^{\m}({\bf n})$ the depth-graded motivic MZV which is an image of $\zeta^{\m}({\bf n})$ in ${\rm gr}^{\depth} \mathcal{H}$.
Let $ \mathcal{H}_{N,r}^{\rm odd}$ be the $\Q$-vector subspace of ${\rm gr}^{\depth}_r \mathcal{H}_N$ spanned by all totally odd motivic MZVs $\zeta_{\depth}^{\m}({\bf n})$ of weight $N$ and depth $r$ (i.e. ${\bf n}\in \Set_{N,r}$):
\[ \mathcal{H}_{N,r}^{\rm odd} := \langle \zeta^\m_{\depth} ({\bf n}) \in {\rm gr}^{\depth}_r \mathcal{H}_N \mid {\bf n} \in \Set_{N,r} \rangle_{\Q}.\]
According to the result of Brown~\cite[Proposition 10.1]{B}, for each odd integer $m>1$ one can define a well-defined derivation $\partial_m$ such that
\[ \partial_m :  \mathcal{H}_{N,r}^{\rm odd} \longrightarrow  \mathcal{H}_{N-m,r-1}^{\rm odd} \ \mbox{and} \  \partial_m\big(\zeta^{\m}_{\depth}(n)\big) = \delta\tbinom{n}{m} ,\]
where $\delta\tbinom{n}{m}=1$ if $n=m$ and 0 otherwise.
We sketch an explicit construction of the derivation $\partial_m$ (we follow an idea used in \cite[Section~5]{B1}).

Recall Brown's operator $D_m$ (see \cite[Definition~3.1]{B1}).
This is an infinitesimal coaction obtained by the coaction of the algebra-comodule $\mathcal{H}$, and becomes a derivation (i.e. $D_m(\xi_1\xi_2)=(1\otimes \xi_1) D_m(\xi_2) + (1\otimes \xi_2) D_m(\xi_1)$ for $\xi_1,\xi_2\in\mathcal{H}$).
The derivation $D_m$ can be computed by using the following explicit formula: for odd $m>1$
\begin{equation}\label{key1}
\begin{aligned}
&D_m(I^\m(a_0;a_1,\ldots,a_N;a_{N+1}))\\
&= \sum_{p=0}^{N-n} I^{\mathfrak{L}} (a_p;a_{p+1},\ldots,a_{p+n};a_{p+n+1})\otimes I^\m(a_0;a_1,\ldots,a_p,a_{p+n+1},\ldots,a_N;a_{N+1}),
\end{aligned}
\end{equation}
where for $a_i\in\{0,1\}$ the element $I^\m(a_0;a_1,\ldots,a_N;a_{N+1})\in\mathcal{H}_N$ is the motivic iterated integral (see \cite[(2.16)]{B1}) and $I^{\mathfrak{L}}$ is an image of $I^\m$ in $\mathcal{L}=\mathcal{A}_{>0}/(\mathcal{A}_{>0})^2$ where $\mathcal{A}=\mathcal{H}/\zeta^{\m}(2)\mathcal{H}$.
We remark that the motivic iterated integral defines the motivic multiple zeta value:
\[ \zeta^\m(n_1,\ldots,n_r) = I^\m(0;1,\underbrace{0,\ldots,0}_{n_1-1},\ldots,1,\underbrace{0,\ldots,0}_{n_r-1};1).\]
Let us denote by $\mathcal{L}_{m}$ the weight $m$ part of the Lie coalgebra $\mathcal{L}$ and by $\mathcal{H}^{\rm odd}$ the $\Q$-vector space spanned by the set $\{\zeta^\m(n_1,\ldots,n_r)\mid r\ge0,n_i\ge3:{\rm odd}\}$.
From the result \cite[Proposition~10.1]{B} we have for odd $m>1$
\[ D_m \big( \depth_r \mathcal{H}^{\rm odd} \big) \subset \mathcal{L}_m \otimes \depth_{r-1} \mathcal{H}^{\rm odd} + \mathcal{L}_m \otimes \depth_{r-2} \mathcal{H},\]
which gives a map for $r\in\Z_{>0}$
\[ {\rm gr}_r^{\depth}D_{m} : \mathcal{H}^{\rm odd}_{N,r} \longrightarrow \mathcal{L}_{m} \otimes_{\Q} \mathcal{H}_{N-m,r-1}^{\rm odd} .\]
By computing ${\rm gr}_r^{\depth} D_m \big( \zeta^{\m}_{\depth} ({\bf n}) \big)$ for ${\bf n}\in \Set_{N,r}$ (see an explicit formula in \eqref{eq4_17}), one can find 
\begin{equation*}
 {\rm gr}_r^{\depth}D_{m}\big(  \mathcal{H}^{\rm odd}_{N,r} \big) \subset \Q\zeta_{m}\otimes_{\Q} \mathcal{H}^{\rm odd}_{N-m,r-1},
\end{equation*}
where $\zeta_{m}$ is an image of $\zeta^{\m}(m)$ in $\mathcal{L}$.
Then the above derivation $\partial_m $ is defined to be $(\zeta_{m}^{\vee}\otimes 1) \circ {\rm gr}^{\depth}_r D_m\big|_{ \mathcal{H}_{N,r}^{\rm odd}}$.

The operator $\partial_m$ corresponds to the action of the canonical generators of the depth-graded motivic Lie algebra $\mathfrak{dg}^\m$ (for the definition, see \cite[\S4]{B}) in depth 1 .
If one believes the Broadhurst-Kreimer conjecture, there should be extra generators of the depth-graded motivic Lie algebra $\mathfrak{dg}^\m$ in only depth 4 and these generators should act trivially on totally odd motivic MZVs.
Thus one is led to the following conjecture.

\begin{conjecture}\label{comb}
A $\Q$-linear combination $\xi$ of totally odd motivic MZVs of weight $N$ and depth $r$ is zero if and only if it satisfies $\partial_{m_r}\circ \partial_{m_{r-1}}\circ \cdots \circ \partial_{m_1} (\xi) =0$ for all $(m_1,\ldots,m_r)\in \Set_{N,r}$.
\end{conjecture}

Conjecture~\ref{comb} is in fact true when $r=2$ and $3$, although we omit the proof.
To prove Conjecture~\ref{comb} for $r=4$, we need to show that the elements $\mathfrak{c}(P)$, obtained by Brown~\cite[Theorem~8.5]{B2}, are uneven modulo commutators of the canonical generators of $\mathfrak{dg}^\m$ in depth 1 (or that the exceptional elements $\overline{\bf e}$ also given by Brown~\cite[Definition~8.1]{B} are motivic), which was pointed out by Francis Brown.
We refer to \cite[\S8.3]{B2} and \cite[\S8.3]{B} for further details.

Conjecture~\ref{comb} is equivalent to Conjecture~\ref{1_1} described in the Introduction. 
This follows from the definition of the matrix $C_{N,r}$ given below.
For totally odd indices $(n_1,\ldots,n_r),(m_1,\ldots,m_r)\in \Set_{N,r}$ we set
\begin{equation}\label{eq2_2}
c\tbinom{m_1,\ldots,m_r}{n_1,\ldots,n_r} = \partial_{m_r}\circ \partial_{m_{r-1}}\circ \cdots \circ \partial_{m_1} \big( \zeta^{\m}_{\depth} (n_1,\ldots,n_r)\big) \in \Q.
\end{equation} 
We note that $c\tbinom{m}{n}=\delta\tbinom{m}{n}$.
\begin{definition}\label{2_2}
For integers $N>r>0$, we define the $|\Set_{N,r}|\times|\Set_{N,r}|$ matrix $C_{N,r}$ by
\begin{equation*}
C_{N,r} = \left( c\tbinom{m_1,\ldots,m_r}{n_1,\ldots,n_r} \right)_{\begin{subarray}{c} (m_1,\ldots,m_r) \in \Set_{N,r} \\ (n_1,\ldots,n_r)\in \Set_{N,r} \end{subarray}}.
\end{equation*}
\end{definition}

Assuming Conjecture~\ref{comb}, we find that the relation
\[ \sum_{(n_1,\ldots,n_r)\in \Set_{N,r}} a_{n_1,\ldots,n_r}\zeta^{\m}_{\depth}(n_1,\ldots,n_r) = 0  \]
holds if and only if $(a_{n_1,\ldots,n_r})_{(n_1,\ldots,n_r)\in \Set_{N,r}}\in \ker\tr C_{N,r}$.
Thus, for $N>r>0$ we have
\begin{equation}\label{eq2_4}
\dim_{\Q} \mathcal{H}_{N,r}^{\rm odd} \stackrel{?}{=} \rank C_{N,r}=|S_{N,r}|-\dim_{\Q}\ker C_{N,r},
\end{equation}
which is Conjecture~\ref{1_1}.
Computing the generating series of $\rank C_{N,r}$ (up to $N=30$), one obtains the following conjecture (see \cite[\S10]{B}):
\begin{equation}\label{eq2_5}
1+\sum_{N>r>0} {\rm rank}\ C_{N,r} x^Ny^r \stackrel{?}{=} \frac{1}{1-\odd(x)y+\cusp(x) y^2} ,
\end{equation}
which by \eqref{eq2_4} implies the uneven part of the motivic Broadhurst-Kreimer conjecture (Conjecture~\ref{2_1}).

\section{Linear relation among totally odd MZVs and even period polynomials}

\subsection{Polynomial representations of Ihara action}
We recall the polynomial representation of the Ihara action of Brown~\cite[\S6]{B}.
This provides an expression of the generating function of the integers $\cc$.

A polynomial representation of the depth-graded version of the linearised Ihara action $\cp:\Q[x_1,\ldots,x_r]\otimes_{\Q} \Q[x_1,\ldots,x_s] \rightarrow \Q[x_1,\ldots,x_{r+s}]$ is given explicitly by 
\begin{equation}\label{eq3_1}
\begin{aligned}
&f\cp g (x_1,\ldots,x_{r+s})=\sum_{i=0}^s f(x_{i+1}-x_i,\ldots,x_{i+r}-x_i)g(x_1,\ldots,x_i,x_{i+r+1},\ldots,x_{r+s})\\
&+(-1)^{{\rm deg}(f)+r}\sum_{i=1}^s f(x_{i+r-1}-x_{i+r},\ldots,x_i-x_{i+r})g(x_1,\ldots,x_{i-1},x_{i+r},\ldots,x_{r+s})
\end{aligned}
\end{equation}
for homogeneous polynomials $f(x_1,\ldots,x_r)$ and $g(x_1,\ldots,x_s)$, where $x_0=0$.
We note that, by duality, for totally odd indices $(m_1,\ldots,m_r),(n_1,\ldots,n_r)\in \Set_{N,r}$ the number $\cc$ defined in \eqref{eq2_2} coincides with the coefficient of $x_1^{n_1-1}\cdots x_r^{n_r-1}$ in $x_1^{m_1-1}\cp (\cdots x_{1}^{m_{r-2}-1}\cp(x_{1}^{m_{r-1}-1}\cp x_1^{m_r-1}) \cdots)$, i.e.
\begin{equation}\label{eq3_2} 
\begin{aligned}
x_1^{m_1-1}\cp &(\cdots x_{1}^{m_{r-2}-1}\cp(x_{1}^{m_{r-1}-1}\cp x_1^{m_r-1}) \cdots) \\
&= \sum_{\substack{n_1+\cdots+n_r=m_1+\cdots+m_r\\n_1,\ldots,n_r\ge1}}\cc x_1^{n_1-1}\cdots x_r^{n_r-1}.
\end{aligned}
\end{equation}

For integers $m_1,\ldots,m_r,n_1,\ldots,n_r\ge1$, let $\ec$ be the integer obtained from the coefficient of $x_1^{n_1-1}\cdots x_r^{n_r-1}$ in $x_1^{m_1-1}\cp (x_1^{m_2-1}\cdots x_{r-1}^{m_r-1})$:
\begin{equation}\label{eq3_3}
x_1^{m_1-1}\cp (x_1^{m_2-1}\cdots x_{r-1}^{m_r-1})=\sum_{\substack{n_1+\cdots+n_r=m_1+\cdots +m_r\\ n_1,\ldots,n_r\ge1}} \ec x_1^{n_1-1}\cdots x_r^{n_r-1} ,
\end{equation}
and $ e\tbinom{m_1}{n_1}=\delta\tbinom{m_1}{n_1}$.
For the latter purpose we now give explicit formulas of the integers $\ec$.
For integers $m_1,\ldots,m_r,n_1,\ldots,n_r,m,n,n'\ge1$ let us define $\delta{\tbinom{m_1,\ldots,m_r}{n_1,\ldots,n_r}}$ as the Kronecker delta given by
\[ \delta{\tbinom{m_1,\ldots,m_r}{n_1,\ldots,n_r}}= \begin{cases} 1 & \mbox{if $m_i=n_i$ for all $i\in \{ 1,\ldots,r\}$}\\
0 & \mbox{otherwise} \end{cases}, \]
and the integer $b_{n,n'}^m$ by
$$b_{n,n'}^{m}=(-1)^{n}\binom{m-1}{n-1}+(-1)^{n'-m} \binom{m-1}{n'-1}.$$
It is obvious that for odd $n,n',m>1$ one has 
\begin{equation}\label{eqan}
b_{n,n'}^m+b_{n',n}^m=0.
\end{equation}
\begin{lemma}\label{3_1}
For integers $m_1,\ldots,m_r,n_1,\ldots,n_r\ge1$, we have
\begin{equation*}
\ec= \delta{\tbinom{m_1,\ldots,m_r}{n_1,\ldots,n_r}}+ \sum_{i=1}^{r-1} \delta{\tbinom{m_2,\ldots,m_i,m_{i+2},\ldots,m_r}{n_1,\ldots,n_{i-1},n_{i+2},\ldots,n_r}} b_{n_{i},n_{i+1}}^{m_1}.
\end{equation*}
\end{lemma}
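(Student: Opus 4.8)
The plan is to prove the formula by directly computing the linearised Ihara action in \eqref{eq3_3} and reading off the coefficient of $x_1^{n_1-1}\cdots x_r^{n_r-1}$. Applying the definition \eqref{eq3_1} with $f=x_1^{m_1-1}$ (a polynomial in one variable, so that the parameter ``$r$'' in \eqref{eq3_1} equals $1$ and the sign $(-1)^{{\rm deg}(f)+r}$ becomes $(-1)^{m_1}$) and $g(x_1,\ldots,x_{r-1})=x_1^{m_2-1}\cdots x_{r-1}^{m_r-1}$ (so that ``$s$'' equals $r-1$), one obtains
\begin{align*}
x_1^{m_1-1}\cp g &= \sum_{i=0}^{r-1}(x_{i+1}-x_i)^{m_1-1}\,g(x_1,\ldots,\hat{x}_{i+1},\ldots,x_r)\\
&\quad +(-1)^{m_1}\sum_{i=1}^{r-1}(x_i-x_{i+1})^{m_1-1}\,g(x_1,\ldots,\hat{x}_i,\ldots,x_r),
\end{align*}
where $x_0=0$. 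First I would isolate the term $i=0$ of the first sum: since $x_0=0$ it equals $x_1^{m_1-1}g(x_2,\ldots,x_r)=x_1^{m_1-1}x_2^{m_2-1}\cdots x_r^{m_r-1}$, which contributes precisely $\delta\tbinom{m_1,\ldots,m_r}{n_1,\ldots,n_r}$ to the coefficient of $x_1^{n_1-1}\cdots x_r^{n_r-1}$.

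For $1\le i\le r-1$, deleting $x_{i+1}$ (resp.\ $x_i$) and substituting the remaining variables into $g$ leaves $x_1,\ldots,x_{i-1}$ and $x_{i+2},\ldots,x_r$ untouched, carrying the fixed powers $m_2-1,\ldots,m_i-1$ and $m_{i+2}-1,\ldots,m_r-1$ respectively. Matching these powers against $x_1^{n_1-1}\cdots x_r^{n_r-1}$ forces $n_1=m_2,\ldots,n_{i-1}=m_i$ and $n_{i+2}=m_{i+2},\ldots,n_r=m_r$, which is exactly the factor $\delta\tbinom{m_2,\ldots,m_i,m_{i+2},\ldots,m_r}{n_1,\ldots,n_{i-1},n_{i+2},\ldots,n_r}$. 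The remaining two variables $x_i,x_{i+1}$ carry a power $m_{i+1}-1$ coming from $g$ together with the contribution of the binomial factor; I would expand $(x_{i+1}-x_i)^{m_1-1}$ and $(x_i-x_{i+1})^{m_1-1}$ by the binomial theorem and extract the coefficient of $x_i^{n_i-1}x_{i+1}^{n_{i+1}-1}$. Since $\ec$ is nonzero only when $n_1+\cdots+n_r=m_1+\cdots+m_r$, whenever the above $\delta$-factor is nonzero the weight identity collapses to $n_i+n_{i+1}=m_1+m_{i+1}$; this is the key point that lets me match one exponent (say that of $x_{i+1}$) and have the other follow automatically.

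A short computation then shows that the $i$-th term of the first sum contributes $(-1)^{n_{i+1}-m_1}\binom{m_1-1}{n_{i+1}-1}$ and, after accounting for the global sign $(-1)^{m_1}$, the $i$-th term of the second sum contributes $(-1)^{n_i}\binom{m_1-1}{n_i-1}$; summing these two gives exactly $b_{n_i,n_{i+1}}^{m_1}$. Adding the diagonal term from $i=0$ and the factors $\delta\tbinom{m_2,\ldots,m_i,m_{i+2},\ldots,m_r}{n_1,\ldots,n_{i-1},n_{i+2},\ldots,n_r}\,b_{n_i,n_{i+1}}^{m_1}$ for $1\le i\le r-1$ yields the asserted identity. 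The argument is entirely a direct calculation, and I expect the only real difficulty to be the bookkeeping: keeping track of which variable inherits which power $m_j-1$ after a variable is deleted from $g$, and carefully controlling the signs $(-1)^{m_1}$ and $(-1)^{m_1-1-k}$ so that they assemble correctly into the two terms of $b_{n_i,n_{i+1}}^{m_1}$.
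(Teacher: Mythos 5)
Your proposal is correct and follows essentially the same route as the paper: expand the linearised Ihara action \eqref{eq3_1} with $f=x_1^{m_1-1}$, identify the $i=0$ term with the Kronecker delta, and for each $1\le i\le r-1$ expand the binomials to recover the factor $\delta\tbinom{m_2,\ldots,m_i,m_{i+2},\ldots,m_r}{n_1,\ldots,n_{i-1},n_{i+2},\ldots,n_r}\,b_{n_i,n_{i+1}}^{m_1}$. The two contributions $(-1)^{n_{i+1}-m_1}\binom{m_1-1}{n_{i+1}-1}$ and $(-1)^{n_i}\binom{m_1-1}{n_i-1}$ you extract match the paper's computation exactly.
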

\begin{proof}
By definition \eqref{eq3_1}, one can compute
\begin{align*}
&x_1^{m_1-1}\cp (x_1^{m_2-1}\cdots x_{r-1}^{m_r-1})= x_1^{m_1-1}\cdots x_r^{m_r-1}  \\
&+ \sum_{i=1}^{r-1} \big( x_{i+1}-x_i\big)^{m_1-1}  \big( x_1^{m_2-1} \cdots x_i^{m_{i+1}-1} x_{i+2}^{m_{i+2}-1}\cdots x_r^{m_r-1} \\
&- x_1^{m_2-1} \cdots x_{i-1}^{m_{i}-1} x_{i+1}^{m_{i+1}-1}\cdots x_r^{m_r-1} \big)\\
&=x_1^{m_1-1}\cdots x_r^{m_r-1} +\sum_{i=1}^{r-1} x_1^{m_2-1}\cdots x_{i-1}^{m_i-1} x_{i+2}^{m_{i+2}-1} \cdots x_r^{m_r-1} \\
& \times \sum_{\substack{n_i+n_{i+1}=m_1+m_{i+1}\\ n_i,n_{i+1}\ge1}}\left( (-1)^{m_1-n_{i+1}} \binom{m_1-1}{n_{i+1}-1}- (-1)^{n_i-1} \binom{m_1-1}{n_i-1} \right)  x_i^{n_{i}-1} x_{i+1}^{n_{i+1}-1} \\
&= \sum_{\substack{n_1+\cdots+n_r=m_1+\cdots+m_r\\n_i\ge1}}\bigg( \delta{\tbinom{m_1,\ldots,m_r}{n_1,\ldots,n_r}}+ \sum_{i=1}^{r-1} \delta{\tbinom{m_2,\ldots,m_i,m_{i+2},\ldots,m_r}{n_1,\ldots,n_{i-1},n_{i+2},\ldots,n_r}} b_{n_{i},n_{i+1}}^{m_1}\bigg) x_1^{n_1-1}\cdots x_r^{n_r-1},
\end{align*}
which completes the proof.
\end{proof}

{\it Remark}. 
One interesting thing about the integer $\ec$ is that it can be obtained as a part of coefficients of the Fourier expansion of the multiple Eisenstein series.
In general, the author and Bachmann~\cite{BT} showed the correspondence between the Fourier expansion of multiple Eisenstein series and the coproduct $\Delta$ defined by Goncharov~\cite{G1}.
However, we were not able to prove any linear relations among totally odd MZVs from this correspondence whenever $r\ge3$.

\

We end this section by defining the matrix $E_{N,r}$ we are actually interested in.
\begin{definition}\label{3_2}
For $N>r>0$, we define the $|\Set_{N,r}|\times |\Set_{N,r}|$ matrix $ E_{N,r}$ by
\begin{equation*}
 E_{N,r} = \left( \ec \right)_{\begin{subarray}{c} (m_1,\ldots,m_r)\in \Set_{N,r}\\ (n_1,\ldots,n_r)\in \Set_{N,r}\end{subarray}} .
\end{equation*}
\end{definition}

\subsection{The relation between $E_{N,r}$ and $C_{N,r}$}

For integers $r\ge2$ and $2\le q\le r$, let $E_{N,r}^{(q)}$ be the $|\Set_{N,r}|\times |\Set_{N,r}|$ matrix defined by
\begin{equation}\label{eq3_4}
 E_{N,r}^{(q)} = \left( \delta\tbinom{m_1,\ldots,m_{r-q}}{n_1,\ldots,n_{r-q}}\cdot  e\tbinom{m_{r-q+1},\ldots,m_r}{n_{r-q+1},\ldots,n_r}  \right)_{\begin{subarray}{c} (m_1,\ldots,m_r)\in \Set_{N,r} \\ (n_1,\ldots,n_r) \in \Set_{N,r}  \end{subarray}}.
\end{equation}
We note $E_{N,r}^{(r)}=E_{N,r}$.
We now prove that the matrix $E_{N,r}$ is a right factor of the matrix $C_{N,r}$, which was indicated by Seidai Yasuda.

\begin{proposition}\label{3_3}
For integers $N>r>1$, we have
\begin{equation*}
 C_{N,r} = E_{N,r}^{(2)} \cdot E_{N,r}^{(3)}   \cdots  E_{N,r}^{(r-1)} \cdot E_{N,r}.
\end{equation*}
\end{proposition}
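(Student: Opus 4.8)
The plan is to realise the matrix product as the stepwise build-up of the iterated Ihara action $x_1^{m_1-1}\cp(\cdots\cp(x_1^{m_{r-1}-1}\cp x_1^{m_r-1})\cdots)$ from \eqref{eq3_2}, one factor $\cp$ at a time, and to track how each intermediate polynomial is encoded by a row vector whose leading coordinates are frozen by the Kronecker deltas appearing in $E_{N,r}^{(q)}$.

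First I would introduce the partial iterated actions. For $2\le q\le r$ set
\[
P_q(x_1,\ldots,x_q)=x_1^{m_{r-q+1}-1}\cp\big(\cdots\cp(x_1^{m_{r-1}-1}\cp x_1^{m_r-1})\cdots\big),
\]
a homogeneous polynomial in $x_1,\ldots,x_q$ built from the last $q$ of the $m_i$, and let $d_q\tbinom{m_{r-q+1},\ldots,m_r}{n_1,\ldots,n_q}$ denote the coefficient of $x_1^{n_1-1}\cdots x_q^{n_q-1}$ in $P_q$. By \eqref{eq3_3} one has $d_2=e$, and by \eqref{eq3_2} one has $d_r\tbinom{m_1,\ldots,m_r}{n_1,\ldots,n_r}=c\tbinom{m_1,\ldots,m_r}{n_1,\ldots,n_r}$. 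Since $\cp$ is linear in its second argument by \eqref{eq3_1}, applying $x_1^{m_{r-q+1}-1}\cp$ to $P_{q-1}$ monomial by monomial and invoking the definition \eqref{eq3_3} of $e$ yields the recursion
\[
d_q\tbinom{m_{r-q+1},\ldots,m_r}{n_1,\ldots,n_q}=\sum_{l_1,\ldots,l_{q-1}\ge1} d_{q-1}\tbinom{m_{r-q+2},\ldots,m_r}{l_1,\ldots,l_{q-1}}\, e\tbinom{m_{r-q+1},l_1,\ldots,l_{q-1}}{n_1,\ldots,n_q}.
\]

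Next I would prove by induction on $q$ that the partial product $E_{N,r}^{(2)}\cdots E_{N,r}^{(q)}$ has $\big((m_1,\ldots,m_r),(n_1,\ldots,n_r)\big)$ entry equal to
\[
\delta\tbinom{m_1,\ldots,m_{r-q}}{n_1,\ldots,n_{r-q}}\cdot d_q\tbinom{m_{r-q+1},\ldots,m_r}{n_{r-q+1},\ldots,n_r}.
\]
The base case $q=2$ is \eqref{eq3_4} together with $d_2=e$. For the inductive step I would multiply the claimed form for $E_{N,r}^{(2)}\cdots E_{N,r}^{(q-1)}$ on the right by $E_{N,r}^{(q)}$: the delta factor of the former freezes $k_i=m_i$ for $i\le r-q+1$, the delta factor of $E_{N,r}^{(q)}$ forces $n_i=k_i$ for $i\le r-q$, so the two combine to $\delta\tbinom{m_1,\ldots,m_{r-q}}{n_1,\ldots,n_{r-q}}$ and in particular fix $k_{r-q+1}=m_{r-q+1}$; summing the product $d_{q-1}\cdot e$ over the remaining free coordinates $k_{r-q+2},\ldots,k_r$ then reproduces exactly the recursion above, delivering $d_q$ on the last $q$ coordinates. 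Taking $q=r$ makes the frozen prefix empty, so the delta disappears and the entry becomes $d_r\tbinom{m_1,\ldots,m_r}{n_1,\ldots,n_r}=c\tbinom{m_1,\ldots,m_r}{n_1,\ldots,n_r}$, which is precisely $C_{N,r}$.

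The main obstacle is purely bookkeeping: keeping the index positions consistent throughout. One must check that each $E_{N,r}^{(q)}$ acts via $e$ on the last $q$ coordinates while the preceding $r-q$ coordinates remain frozen, that these last coordinates of the intermediate row vector correspond to the variables $x_1,\ldots,x_q$ of $P_q$ (so that the frozen prefix $m_1,\ldots,m_{r-q}$ never interferes with the $e$-factor), and that the $\cp$-action is genuinely linear in the monomial to which it is applied, so that the monomial-by-monomial expansion legitimately produces the $e$-coefficients. Once these alignments are pinned down the induction is routine.
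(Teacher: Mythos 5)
Your overall strategy is the paper's own, merely organized as an induction on the number of factors $q$ rather than on the depth $r$: the partial product $E_{N,r}^{(2)}\cdots E_{N,r}^{(q)}$ is the block-diagonal matrix whose blocks are the depth-$q$ analogues of $C$, and one new factor is absorbed per step via linearity of $\cp$ in its second argument. The bookkeeping you describe (the frozen prefix, the alignment of the last $q$ coordinates with the variables of $P_q$) is correct and is exactly the content of the paper's inductive step.

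There is, however, one genuine gap, and it is the only non-routine point of the whole proof. Your recursion for $d_q$ sums over \emph{all} tuples $(l_1,\ldots,l_{q-1})$ with $l_i\ge1$, because that is what linearity of $\cp$ gives when $P_{q-1}$ is expanded into monomials. The matrix product $\bigl(E_{N,r}^{(2)}\cdots E_{N,r}^{(q-1)}\bigr)\cdot E_{N,r}^{(q)}$, on the other hand, sums only over intermediate indices $(k_1,\ldots,k_r)\in\Set_{N,r}$, so the free coordinates $k_{r-q+2},\ldots,k_r$ run over odd integers $\ge3$ only. To identify the two sums you must show that the terms in which the intermediate index $(m_{r-q+1},l_1,\ldots,l_{q-1})$ fails to be totally odd contribute nothing, i.e.\ that $e\tbinom{t_1,\ldots,t_q}{n_1,\ldots,n_q}=0$ whenever $t_1\ge3$ is odd and $(n_1,\ldots,n_q)$ is totally odd but $(t_1,\ldots,t_q)$ is not. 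This is true but needs the explicit formula of Lemma~\ref{3_1}: in the $i$-th term of that formula the Kronecker delta forces every $t_j$ with $j\ne i+1$ to equal some $n_k$, hence to be odd $\ge3$; parity of the weight then forces $t_{i+1}$ to be odd, so the only bad case is $t_{i+1}=1$, which means $t_1=n_i+n_{i+1}-1$ and hence
\[
b^{t_1}_{n_i,n_{i+1}}=-\binom{n_i+n_{i+1}-2}{n_i-1}+\binom{n_i+n_{i+1}-2}{n_{i+1}-1}=0
\]
by the symmetry of binomial coefficients. The paper invokes exactly this vanishing (citing Lemma~\ref{3_1}); without it, your step ``summing over the remaining free coordinates reproduces exactly the recursion'' equates two different sums. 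Once this vanishing is supplied, your argument closes.
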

\begin{proof}
This is shown by induction on $r$.
When $r=2$, the assertion $C_{N,2}=E_{N,2}$ follows from the definition of the integer $\ec$ in \eqref{eq3_3} and the generating function of $\cc$ in \eqref{eq3_2}.
For $r\ge3$, we set
\begin{align*}
f(x_1,\ldots,x_{r-1}) &= x_1^{m_2-1}\cp ( \cdots \cp(x_{1}^{m_{r-1}-1}\cp x_{1}^{m_r-1} )\cdots)\\
&=\sum_{\substack{n_2+\cdots+n_r=m_2+\cdots+m_r\\n_2,\ldots,n_r\ge1}} c{\tbinom{m_2,\ldots,m_r}{n_2,\ldots,n_r}} x_1^{n_2-1}\cdots x_{r-1}^{n_r-1}.
\end{align*}
By definition, we see that $x_1^{m_1-1}\cp f(x_1,\ldots,x_{r-1})  = \sum_{n_1+\cdots+n_r=N} \cc x_1^{n_1-1}\cdots x_r^{n_r-1}$, where $N=m_1+\cdots+m_r$.
One can also compute by linearity
\begin{align*}
x_1^{m_1-1}\cp f(x_1,\ldots,x_{r-1}) &= \sum_{t_2+\cdots+t_r=m_2+\cdots+m_r} c{\tbinom{m_2,\ldots,m_r}{t_2,\ldots,t_r}} x_1^{m_1-1}\cp ( x_1^{t_2-1}\cdots x_{r-1}^{t_r-1})\\
&=  \sum_{t_1+\cdots+t_r=N}\delta\tbinom{m_1}{t_1} c{\tbinom{m_2,\ldots,m_r}{t_2,\ldots,t_r}} x_1^{t_1-1}\cp  (x_1^{t_2-1}\cdots x_{r-1}^{t_r-1})\\
&=  \sum_{t_1+\cdots+t_r=N} \delta\tbinom{m_1}{t_1} c{\tbinom{m_2,\ldots,m_r}{t_2,\ldots,t_r}}  \sum_{n_1+\cdots+n_r=N}  e{\tbinom{t_1,\ldots,t_r}{n_1,\ldots,n_r}} x_1^{n_1-1}\cdots x_r^{n_r-1}\\
&= \sum_{n_1+\cdots+n_r=N} \bigl( \sum_{t_1+\cdots+t_r=N} \delta\tbinom{m_1}{t_1} c{\tbinom{m_2,\ldots,m_r}{t_2,\ldots,t_r}}   e{\tbinom{t_1,\ldots,t_r}{n_1,\ldots,n_r}} \bigr) x_1^{n_1-1}\cdots x_r^{n_r-1},
\end{align*}
where $t_1,\ldots,t_r\ge1$.
For the totally odd indices $(m_1,\ldots,m_r),(n_1,\ldots,n_r)\in \Set_{N,r}$, the term $ \delta\tbinom{m_1}{t_1}  e{\tbinom{t_1,\ldots,t_r}{n_1,\ldots,n_r}}$ in the last equation is 0 if $(t_1,\ldots,t_r)\not\in \Set_{N,r}$ with $t_1+\cdots+t_r=N$ (this follows from the explicit formula of $\ec$ in Lemma~\ref{3_1}), so one has
\begin{equation*}
c{\tbinom{m_1,\ldots,m_{r}}{n_1,\ldots,n_{r}}} = \sum_{(t_1,\ldots,t_r)\in \Set_{N,r}} \delta\tbinom{m_1}{t_1}c{\tbinom{m_2,\ldots,m_{r}}{t_2,\ldots,t_{r}}}   e{\tbinom{t_1,\ldots,t_r}{n_1,\ldots,n_r}}.
\end{equation*}
Then our claim follows from the induction hypothesis
\[ \left( \delta\tbinom{m_1}{n_1} c\tbinom{m_2,\ldots,m_r}{n_2,\ldots,n_r} \right)_{\begin{subarray}{c} (m_1,\ldots,m_r)\in \Set_{N,r} \\ (n_1,\ldots,n_r)\in \Set_{N,r} \end{subarray}} = E_{N,r}^{(2)} \cdot E_{N,r}^{(3)}   \cdots  E_{N,r}^{(r-1)}.\]
\end{proof}

Let us illustrate a few examples of the elements in $\ker\tr E_{N,r}$. 
For the matrix $E_{N,2}$, the first non-trivial example is obtained from the matrix 
\[
E_{12,2}=\left(
\begin{array}{cccc}
  e{\tbinom{3,9}{3,9}} &   e{\tbinom{3,9}{5,7}} &   e{\tbinom{3,9}{7,5}} &   e{\tbinom{3,9}{9,3}} \\
  e{\tbinom{5,7}{3,9}} &   e{\tbinom{5,7}{5,7}} &   e{\tbinom{5,7}{7,5}} &   e{\tbinom{5,7}{9,3}}  \\
  e{\tbinom{7,9}{5,9}} &   e{\tbinom{7,5}{5,7}} &   e{\tbinom{7,5}{7,5}} &   e{\tbinom{7,5}{9,3}}  \\
  e{\tbinom{9,3}{3,9}} &   e{\tbinom{9,3}{5,7}} &   e{\tbinom{9,3}{7,5}} &   e{\tbinom{9,3}{9,3}}  \\
\end{array}
\right) =\left(
\begin{array}{cccc}
 0 & 0 & 0 & 1 \\
 -6 & 0 & 1 & 6 \\
 -15 & -14 & 15 & 15 \\
 -27 & -42 & 42 & 28 \\
\end{array}
\right) .
\]
The space $\ker\tr E_{12,2}$ is generated by the vector $(14, 75, 84, 0)$, which gives the well-known relation obtained by Gangl, Kaneko and Zagier~\cite{GKZ}:
\begin{equation}\label{eg1}
14\zeta_{\depth}(3,9)+75\zeta_{\depth}(5,7)+84\zeta_{\depth}(7,5) =0 .
\end{equation}
In the case of $r=3$, we consider the matrix 
\begin{align*}
&E_{15,3} =\left(
\begin{array}{cccccccccc}
 0 & 0 & 0 & 1 & 0 & 0 & 0 & 0 & 0 & 0 \\
 0 & 0 & 0 & 0 & 0 & 0 & 1 & 0 & 0 & 0 \\
 0 & 0 & 0 & 0 & 0 & 0 & 0 & 0 & 1 & 0 \\
 0 & 0 & 0 & 0 & 0 & 0 & 0 & 0 & 0 & 1 \\
 -6 & -6 & 1 & 6 & 6 & 0 & 0 & 0 & 0 & 0 \\
 0 & 0 & -6 & 0 & -6 & 1 & 6 & 6 & 0 & 0 \\
 0 & 0 & 0 & -6 & 0 & 0 & 0 & -5 & 6 & 6 \\
 -15 & -14 & 0 & 15 & 0 & 0 & 0 & 15 & 0 & 0 \\
 0 & 0 & 0 & -15 & -14 & 0 & 0 & 0 & 15 & 15 \\
 -27 & -42 & 42 & 0 & 0 & 0 & -42 & 0 & 42 & 28 \\
\end{array}
\right).
\end{align*}
The space $\ker\tr E_{15,3}$ is generated by $(-14, 15, 6, 0, 0, 36, 0, 0, 0, 0)$, which gives 
\begin{equation}\label{eg2}
-14 \zeta_{\depth}(3, 3, 9) + 15 \zeta_{\depth}(3, 5, 7) + 6 \zeta_{\depth}(3, 7, 5) +  36 \zeta_{\depth}(5, 5, 5)=0 .
\end{equation}
The following relation is obtained from the space $\ker\tr E_{18,4}$ by assuming Conjecture~\ref{comb} for $r=4$: 
\begin{equation}\label{eg3}
70 \zeta_{\depth}(3, 3, 3, 9) - 75 \zeta_{\depth}(3, 3, 5, 7) - 30 \zeta_{\depth}(3, 3, 7, 5) +  36 \zeta_{\depth} (3, 5, 5, 5)=0.
\end{equation}

{\it Remark}.
It is probably worth mentioning that the elements in $\ker\tr E_{N,2}$ are completely determined by restricted even period polynomials of degree $N-2$, first discovered by Gangl, Kaneko and Zagier~\cite{GKZ} (see also \cite{BS}).
As a natural question, one can ask if there is an explicit description of elements in $\ker\tr E_{N,r}$ for $r>2$.

\subsection{Relation with period polynomials}
This section is devoted to proving Theorem~\ref{1_3}.

Let $\mathbf{P}_{N,r}\subset\Q[x_1,\ldots,x_r]$ be the $|\Set_{N,r}|$-dimensional $\Q$-vector space spanned by the set $\{x_1^{n_1-1}\cdots x_r^{n_r-1}\mid (n_1,\ldots,n_r)\in \Set_{N,r}\}$, and $\W_{N,r}$ its subspace defined for $r\ge2$ by
\begin{equation*}
\W_{N,r}=\{ p\in \mathbf{P}_{N,r}\mid p(x_1,\ldots,x_r) = p(x_2-x_1,x_2,\underbrace{x_3,\ldots,x_r}_{r-2})-p(x_2-x_1,x_1,\underbrace{x_3,\ldots,x_r}_{r-2})\}.
\end{equation*}
The space $\W_{N,2}$ is called the space of restricted even period polynomials (see \cite{BS}).
As mentioned in the Introduction, the dimension of the space $\W_{N,2}$ is equal to the dimension of the $\C$-vector space of cusp forms of weight $N$ for $\Gamma_1$, so that we have
\begin{equation}\label{eq3_5}
 \sum_{N>0} \dim_{\Q} \W_{N,2} x^N =\cusp(x).
\end{equation}
For $r\ge3$, we easily find that $\W_{N,r}\cong \bigoplus_{1<n<N} \W_{n,2} \otimes_{\Q} \mathbf{P}_{N-n,r-2}$.
In fact, every element in $\W_{N,r}$ can be written as $\Q$-linear combinations of the form $p(x_1,x_2) x_3^{n_3-1}\cdots x_r^{n_r-1}$ ($p(x_1,x_2)\in \W_{n,2}, (n_3,\ldots,n_r)\in \Set_{N-n,r-2}, 1<n<N$).
Thus, from $\odd(x)^r=\sum_{N>0} |\Set_{N,r}|x^N$ and \eqref{eq3_5}, we obtain the dimension of the space $\W_{N,r}$:
\begin{equation}\label{eq3_6}
\sum_{N>0} \dim_{\Q} \W_{N,r} x^N = \cusp(x)\cdot \odd(x)^{r-2}.
\end{equation}

Baumard and Schneps~\cite{BS} have shown that the space $\W_{N,2}$ is isomorphic to the space $\ker E_{N,2}$.
Since this fact is used for proving Theorem~\ref{1_4}, we reprove it for completeness.
Set an isomorphism $\pi(=\pi^{(N,r)})$ as follows.
\begin{align*}
\pi:{\mathbf P}_{N,r}&\longrightarrow \V_{N,r}\\
 \sum_{(n_1,\ldots,n_r)\in \Set_{N,r}} a_{n_1,\ldots,n_r} x_1^{n_1-1}\cdots x_r^{n_r-1}&\longmapsto (a_{n_1,\ldots,n_r})_{(n_1,\ldots,n_r)\in \Set_{N,r}}.
 \end{align*}

\begin{proposition}\label{3_5} {\rm (\cite[Proposition~3.2]{BS})} 
For each integer $N>0$ one has
\[ \pi \big( \W_{N,2} \big) = \ker E_{N,2}.\]
\end{proposition}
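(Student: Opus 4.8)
The plan is to turn the assertion into an identity of polynomials and then read off both kernels. Writing $p=\sum_{(m_1,m_2)\in\Set_{N,2}}a_{m_1,m_2}x_1^{m_1-1}x_2^{m_2-1}$ so that $\pi_1(p)=(a_{m_1,m_2})$, I would first use \eqref{eq3_1} (equivalently Lemma~\ref{3_1}) to compute
\[ x_1^{m_1-1}\cp x_1^{m_2-1}=x_1^{m_1-1}x_2^{m_2-1}+(x_2-x_1)^{m_1-1}x_1^{m_2-1}-(x_2-x_1)^{m_1-1}x_2^{m_2-1}. \]
By \eqref{eq3_3} and linearity, the $(n_1,n_2)$-entry of the row vector $\pi_1(p)\cdot E_{N,2}$ equals the coefficient of $x_1^{n_1-1}x_2^{n_2-1}$ in
\[ q:=p(x_1,x_2)+p(x_2-x_1,x_1)-p(x_2-x_1,x_2). \]
Hence $\pi_1(p)\in\ker E_{N,2}$ exactly when every coefficient of $q$ indexed by $\Set_{N,2}$ (the ``totally odd'' coefficients) vanishes, whereas $p\in\W_{N,2}$ exactly when $q\equiv 0$, since $q=0$ is the defining relation of $\W_{N,2}$. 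The inclusion $\pi_1(\W_{N,2})\subseteq\ker E_{N,2}$ is then immediate, and the whole content is the converse: vanishing of the totally odd part of $q$ should force $q\equiv 0$.

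To prove this I would manufacture two functional equations for $q$. Since each $p\in\mathbf{P}_{N,2}$ is even in both variables (the exponents $n_i-1$ are even), substituting $x_1\leftrightarrow x_2$ and using $p(x_1-x_2,\cdot)=p(x_2-x_1,\cdot)$ gives the identity, valid for every such $p$,
\[ q(x_1,x_2)+q(x_2,x_1)=p(x_1,x_2)+p(x_2,x_1), \]
while substituting $x_1\mapsto x_2-x_1$ and cancelling the outer terms gives
\[ q(x_1,x_2)+q(x_2-x_1,x_2)=p(x_2-x_1,x_1)+p(x_1,x_2-x_1). \]
Now suppose the totally odd part of $q$ vanishes. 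In the first identity the right-hand side is totally odd, while the left-hand side has no totally odd part (swapping $x_1,x_2$ permutes the totally odd monomials, so $q(x_2,x_1)$ also has none); hence both sides vanish, giving simultaneously $q(x_2,x_1)=-q(x_1,x_2)$ and $p(x_1,x_2)+p(x_2,x_1)=0$. The last relation makes the right-hand side $p(x_2-x_1,x_1)+p(x_1,x_2-x_1)$ of the second identity vanish as well, so $q(x_2-x_1,x_2)=-q(x_1,x_2)$.

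Finally I would dehomogenise: put $\Xi(u)=q(u,1)$, so that the coefficient of $u^{c}$ in $\Xi$ is the coefficient of $x_1^{c}x_2^{N-2-c}$ in $q$. For even $c$ with $2\le c\le N-4$ this is a totally odd coefficient, hence zero; moreover $q(x_2,x_1)=-q(x_1,x_2)$ forces $q(x,x)=0$, and then $q(x_2-x_1,x_2)=-q(x_1,x_2)$ yields $q(0,x_2)=q(x_1,0)=0$, killing the two remaining even coefficients $c=0,N-2$. Thus every even coefficient of $\Xi$ vanishes, i.e.\ $\Xi$ is an odd polynomial, $\Xi(-u)=-\Xi(u)$. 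The antisymmetry relation now reads $\Xi(1-u)=-\Xi(u)$, and combined with oddness this gives $\Xi(1+u)=\Xi(u)$. A polynomial fixed by $u\mapsto u+1$ is constant, and an odd constant is $0$; hence $\Xi\equiv 0$, so $q\equiv 0$ and $\pi_1(p)\in\ker E_{N,2}$ implies $p\in\W_{N,2}$, completing the equality.

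The step I expect to be the crux is this converse. It is tempting to hope that $q$ is already totally odd when $p$ is, but that is false: $q$ genuinely carries odd--odd and boundary terms, so one cannot argue coefficient by coefficient. The essential device is the second functional equation coming from $x_1\mapsto x_2-x_1$; it is exactly what upgrades the oddness of $\Xi$ into the periodicity $\Xi(u+1)=\Xi(u)$ and thereby annihilates the whole antisymmetric part at once. I would therefore concentrate the care on verifying the two substitution identities (the first one using evenness of $p$), after which the conclusion is purely formal; this recovers the Baumard--Schneps isomorphism.
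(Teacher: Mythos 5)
Your proof is correct, and its overall frame matches the paper's: you identify the $(n_1,n_2)$-entry of $\pi_1(p)E_{N,2}$ with the coefficient of $x_1^{n_1-1}x_2^{n_2-1}$ in $q=p(x_1,x_2)+p(x_2-x_1,x_1)-p(x_2-x_1,x_2)$, so that membership in $\ker E_{N,2}$ is the vanishing of the totally odd part of $q$ while membership in $\W_{N,2}$ is $q\equiv 0$; the easy inclusion is then handled in both arguments by a parity observation (the paper isolates the non-totally-odd remainder as the symmetrized expression \eqref{eq3_8} and kills it using evenness). Where you genuinely diverge is the converse. The paper works in the group ring of ${\rm PGL}_2(\Z)$: it extracts the antisymmetry $a_{n_1,n_2}=-a_{n_2,n_1}$ from the explicit coefficients $b_{n,n'}^{m}$, forms $G=p\big|(1-T+T\varepsilon)$, uses the relations $T\delta=\delta T^{-1}$ and $T\varepsilon\delta=\varepsilon T\varepsilon T^{-1}$ to get $G\big|(1-T^{-1})=0$, and concludes $G=0$ from the triangularity of the action of $1-T^{-1}$. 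You instead derive two substitution identities for $q$ itself, obtain the antisymmetry of $p$ and of $q$ by comparing totally odd parts on both sides of the first identity, feed that into the second to get $q(x_2-x_1,x_2)=-q(x_1,x_2)$, and finish by dehomogenising: $\Xi(u)=q(u,1)$ is odd and satisfies $\Xi(u+1)=\Xi(u)$, hence is zero. The endgames are the same fact in different clothing (a polynomial invariant under $u\mapsto u+1$ is constant is exactly what makes $1-T^{-1}$ ``triangular''), but your route avoids the group-ring bookkeeping and gets the antisymmetry of $p$ from the polynomial identity rather than from \eqref{eqan}; the price is the two substitution identities, which you verify correctly --- both hinge only on $p$ being even in its first variable, which holds on all of $\mathbf{P}_{N,2}$.
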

\begin{proof}
For every polynomial $p(x_1,x_2)=\sum_{(n_1,n_2)\in \Set_{N,2}} a_{n_1,n_2} x_1^{n_1-1}x_2^{n_2-1}$ satisfying $p(x,x)=0$, one can compute
\begin{align}
\notag&p(x_1,x_2)-p(x_2-x_1,x_2)+p(x_2-x_1,x_1)\\
\notag&= \sum_{(m_1,m_2)\in \Set_{N,2}} a_{m_1,m_2} \sum_{\substack{n_1+n_2=N\\n_1,n_2\ge2}} \bigg( \delta\tbinom{m_1,m_2}{n_1,n_2}- (-1)^{n_1-1} \binom{m_1-1}{n_1-1} \\
\notag&+ (-1)^{m_1-n_2}\binom{m_1-1}{n_2-1}\bigg) x_1^{n_1-1}x_2^{n_2-1} \\
\notag&=  \sum_{\substack{n_1+n_2=N\\n_1,n_2\ge2}} \Big(   \sum_{(m_1,m_2)\in \Set_{N,2}} a_{m_1,m_2}  e{\tbinom{m_1,m_2}{n_1,n_2}} \Big) x_1^{n_1-1}x_2^{n_2-1}\\
\label{eq3_7}&= \sum_{(n_1,n_2)\in \Set_{N,2}} \Big( \sum_{(m_1,m_2)\in \Set_{N,2}} a_{m_1,m_2}  e{\tbinom{m_1,m_2}{n_1,n_2}} \Big) x_1^{n_1-1}x_2^{n_2-1}\\
\label{eq3_8}&+\frac{1}{2}\big( p(x_2-x_1,x_1)-p(x_2-x_1,x_2)-p(x_2+x_1,x_1)+p(x_2+x_1,x_2)\big).
\end{align}
Assume $p(x_1,x_2)\in \W_{N,2}$.
Since $p(x_1,x_2)=p(x_2-x_1,x_2)-p(x_2-x_1,x_1)=p(x_2+x_1,x_2)-p(x_2+x_1,x_1)$, the polynomial in \eqref{eq3_8} is zero.
Then the polynomial in \eqref{eq3_7} has to be 0, which implies $\pi(p)\in \ker E_{N,2}$.
We now prove $\pi^{-1} (v)(x_1,x_2) \in \W_{N,2}$ for $v\in \ker E_{N,2}$ using the action of the group ${\rm PGL}_2(\Z)$ on $\Q[x_1,x_2]_{(N-2)}$ $(N:{\rm even})$ defined by $(f\big|\gamma)(x_1,x_2) = f(ax_1+bx_2,cx_1+dx_2)$ for $\gamma=(\begin{smallmatrix} a&b \\c&d \end{smallmatrix})$.
Set
\[ \delta = \begin{pmatrix} -1&0\\0&1 \end{pmatrix},\  \varepsilon = \begin{pmatrix} 0&1\\1&0 \end{pmatrix}, \ T = \begin{pmatrix} 1&1\\0&1 \end{pmatrix} \in {\rm PGL}_2(\Z).\]
For $v=(a_{n_1,n_2})_{(n_1,n_2)\in \Set_{N,2}}\in \ker E_{N,2}$, one has
\[0=E_{N,2}(v)= \bigg( a_{n_1,n_2} + \sum_{(m_1,m_2)\in \Set_{N,2}} a_{m_1,m_2} b_{n_1,n_2}^{m_1} \bigg)_{(n_1,n_2)\in \Set_{N,2}} .\]
By \eqref{eqan}, one obtains $a_{n_1,n_2}=-a_{n_2,n_1}$.
This shows $\pi^{-1}(v)\big| ( \varepsilon +1)=0$, where we have extended the action of ${\rm PGL}_2(\Z)$ to its group ring by linearity.
We notice that $\pi^{-1}(v)\big| (\delta -1)=0$ because of even.
Using $T\delta =\delta T^{-1}$ and $T \varepsilon \delta =  \varepsilon T \varepsilon T^{-1}$, we have $ \pi^{-1}(v)\big|(1-T+T \varepsilon )\delta = \pi^{-1}(v)\big|(1-T^{-1} +  \varepsilon T  \varepsilon T^{-1} ) = -\pi^{-1}(v)\big| (1-T+T \varepsilon) T^{-1} $.
Let $G=\pi^{-1}(v)\big|(1-T+T \varepsilon )$. 
Then $G(0,x_2)=0$ and 
\[ 0=2\times \eqref{eq3_7} = G\big|(1+\delta)=G\big| (1-T^{-1}).\]
Since the coefficient matrix obtained by the action of $1-T^{-1}$ on $G$ becomes triangular, one finds $G=0$.
The assertion follows from
\begin{align*} 0=&G(x_1,x_2)= \pi^{-1}(v)(x_1,x_2)-\pi^{-1}(v)(x_2+x_1,x_2)+\pi^{-1}(v)(x_2+x_1,x_1)\\
=&\pi^{-1}(v)(x_1,x_2)-\pi^{-1}(v)(x_2-x_1,x_2)+\pi^{-1}(v)(x_2-x_1,x_1).
\end{align*}
\end{proof}

From \eqref{eq3_5} one immediately obtains an exact value of $\rank E_{N,2}$.
\begin{corollary}\label{3_6} The generating function of the rank of the matrix $E_{N,2}$ is given by
\begin{equation*}
\sum_{N>0} \rank E_{N,2}x^N= \odd(x)^2-\cusp(x),
\end{equation*}
or equivalently, one has $\sum_{N>0} \dim_{\Q} \ker E_{N,2}x^N = \cusp(x)$. 
\end{corollary}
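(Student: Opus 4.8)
The plan is to deduce both asserted identities immediately from Proposition~\ref{3_5} together with the dimension formula \eqref{eq3_5}. The map $\pi_1:\mathbf{P}_{N,2}\to\V_{N,2}$ is a linear isomorphism, so the equality $\pi_1(\W_{N,2})=\ker E_{N,2}$ furnished by Proposition~\ref{3_5} forces
\[
\dim_{\Q}\ker E_{N,2}=\dim_{\Q}\W_{N,2}
\]
for every $N>0$. Summing against $x^N$ and invoking \eqref{eq3_5} then yields the second (equivalent) identity $\sum_{N>0}\dim_{\Q}\ker E_{N,2}\,x^N=\cusp(x)$ at once.

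For the rank formula I would use that $E_{N,2}$ is a square matrix of size $|\Set_{N,2}|$, so that rank and nullity are complementary:
\[
\rank E_{N,2}=|\Set_{N,2}|-\dim_{\Q}\ker E_{N,2}.
\]
Multiplying by $x^N$, summing over $N>0$, and substituting the generating-function identity $\sum_{N>0}|\Set_{N,2}|\,x^N=\odd(x)^2$ (the case $r=2$ of $\odd(x)^r=\sum_{N>0}|\Set_{N,r}|\,x^N$) together with the nullity computation above produces
\[
\sum_{N>0}\rank E_{N,2}\,x^N=\odd(x)^2-\cusp(x),
\]
as claimed.

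There is no genuine obstacle here: all of the substantive content is already packaged in Proposition~\ref{3_5} (which, through $\W_{N,2}$, carries the Eichler--Shimura--Manin input identifying the kernel with restricted even period polynomials) and in the dimension count \eqref{eq3_5}. The only points left to verify are purely formal --- that $\pi_1$ preserves dimension, that $E_{N,2}$ being square makes rank and nullity complementary, and that the two generating functions combine with the correct signs. Since these are all bookkeeping, the corollary follows by direct substitution.
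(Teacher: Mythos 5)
Your argument is correct and coincides with the paper's own (implicit) proof: the corollary is stated as an immediate consequence of Proposition~\ref{3_5} together with \eqref{eq3_5}, exactly via the dimension count $\dim_{\Q}\ker E_{N,2}=\dim_{\Q}\W_{N,2}$ and the rank--nullity relation for the square matrix $E_{N,2}$. Nothing further is needed.
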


We now present our result for $r\ge3$.
Set the identity map $I_{N,r}\in M_{|\Set_{N,r}|}(\Z)$ on the vector space $\V_{N,r}$:
\[ I_{N,r}=\big( \delta\tbinom{m_1,\ldots,m_r}{n_1,\ldots,n_r} \big)_{\begin{subarray}{c} (m_1,\ldots,m_r)\in \Set_{N,r} \\ (n_1,\ldots,n_r)\in \Set_{N,r} \end{subarray}}.\]

\begin{theorem}\label{3_7}
Let $r$ be a positive integer greater than $2$ and $ F_{N,r}$ the matrix $E_{N,r}-I_{N,r}$.
Then, the following $\Q$-linear map is injective:
\begin{align*}
\W_{N,r} &\longrightarrow \ker E_{N,r}\\
p(x_1,\ldots,x_r) &\longmapsto  F_{N,r}\big(\pi(p(x_1,\ldots,x_r))\big).
\end{align*}
\end{theorem}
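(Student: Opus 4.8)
The plan is to transport everything to the polynomial side via $\pi_1$ and to read off the actions of $E_{N,r}$ and $F_{N,r}=E_{N,r}-I_{N,r}$ from Lemma~\ref{3_1}. Write $\mathcal{E},\mathcal{F}$ for the $\Q$-linear endomorphisms of $\mathbf{P}_{N,r}$ induced by $v\mapsto vE_{N,r}$ and $v\mapsto vF_{N,r}$, so that $\mathcal{E}=\mathrm{id}+\mathcal{F}$. By Lemma~\ref{3_1} one may split $\mathcal{F}=\mathcal{F}_1+R$, where $\mathcal{F}_1$ is the $i=1$ summand (the term $\delta\tbinom{m_3,\ldots,m_r}{n_3,\ldots,n_r}b_{n_1,n_2}^{m_1}$) and $R=\sum_{i=2}^{r-1}$ collects the rest. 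The point of this splitting is that $\mathcal{F}_1$ is exactly the depth-$2$ operator $E_{\bullet,2}-I_{\bullet,2}$ applied to each $(x_1,x_2)$-slice of a polynomial, with $x_3,\ldots,x_r$ carried along as spectators. Since membership of $p$ in $\W_{N,r}$ says precisely that every such slice is a restricted even period polynomial, Proposition~\ref{3_5} yields $\mathcal{F}_1(p)=-p$, hence the normal form
\[ \mathcal{F}(p)=-p+R(p),\qquad \mathcal{E}(p)=R(p)\qquad(p\in\W_{N,r}), \]
which I will use throughout.

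For well-definedness I must show $\mathcal{E}\big(\mathcal{F}(p)\big)=0$, i.e. that $F_{N,r}(\pi_1 p)$ lies in $\ker E_{N,r}$. Because $E_{N,r}$ and $F_{N,r}=E_{N,r}-I_{N,r}$ commute, $\mathcal{E}\mathcal{F}=\mathcal{F}\mathcal{E}$, and the normal form collapses the claim to the single identity $\mathcal{F}\big(R(p)\big)=0$ for $p\in\W_{N,r}$ (equivalently $\mathcal{E}^{2}(p)=\mathcal{E}(p)$). I would establish this by the polynomial computation underlying Proposition~\ref{3_5}, now carried out in $x_1,x_2$ with $x_3,\ldots,x_r$ treated as parameters: expanding the totally odd projection of $\mathcal{E}\mathcal{F}(p)$ as in \eqref{eq3_7}--\eqref{eq3_8} rewrites it as a period-relation term plus a symmetrization term, the first killed by the defining relation of $\W_{N,r}$ in the first two variables and the second by the antisymmetry \eqref{eqan}, $b_{n,n'}^{m}+b_{n',n}^{m}=0$.

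For injectivity it suffices, again by the normal form, to show that $p=R(p)$ with $p\in\W_{N,r}$ forces $p=0$. Here I would induct on $r$ through the decomposition $\W_{N,r}\cong\bigoplus_{1<n<N}\W_{n,2}\otimes_{\Q}\mathbf{P}_{N-n,r-2}$, writing $p=\sum_j p_j(x_1,x_2)\,x_3^{a_3-1}\cdots x_r^{a_r-1}$ with $p_j\in\W_{n_j,2}$, and run a triangularity argument with respect to a monomial order refined enough to separate the diagonal contribution $-p$ from $R(p)$. In each summand $R_i$ $(i\ge2)$ the head exponent $m_1$ is pushed into an interior slot while $m_2$ becomes the leading exponent $n_1$; combining this with the $x_1\leftrightarrow x_2$ antisymmetry $a_{n_1,n_2}=-a_{n_2,n_1}$ of the slices (from Proposition~\ref{3_5}) and with the fact that period polynomials vanish at $x_i=0$, one expects the would-be diagonal contributions of $R$ to cancel, so that $-p$ survives as a genuine leading term and $p=R(p)$ is impossible unless $p=0$.

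The main obstacle is precisely this no-cancellation analysis for injectivity. Because the linearised Ihara action $\cp$ permits the head exponent $m_1$ to be arbitrarily large, no grading by a single exponent makes $R$ strictly lower order; the triangularity must therefore be organised with respect to a carefully chosen refined order and must genuinely exploit both the antisymmetry and the restricted (vanishing at $x_i=0$) nature of period polynomials, and controlling the interaction of the $r-2$ operators $R_i$ under this order is the technical heart of the argument. By comparison, the $r$-variable generalisation of the symmetrization identity \eqref{eq3_7}--\eqref{eq3_8} needed for well-definedness is comparatively routine once the slice interpretation of $\mathcal{F}_1$ is in hand.
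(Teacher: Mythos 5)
Your reduction of the well-definedness half is sound and in fact coincides with the paper's: writing $\sigma_r=\sigma_r^{(1)}+\cdots+\sigma_r^{(r-1)}$ for the operator inducing $F_{N,r}$ on polynomials, the paper likewise uses $p\big|(1+\sigma_r^{(1)})=0$ to collapse $E_{N,r}\big(F_{N,r}(\pi_1 p)\big)$ to the totally odd part of $p\big|(\sigma_r^{(2)}+\cdots+\sigma_r^{(r-1)})\big|\sigma_r$, which is exactly your $\mathcal{F}(R(p))$. But your plan for killing this term --- a two-variable computation ``in $x_1,x_2$ with $x_3,\ldots,x_r$ treated as parameters'' --- does not match what is actually required. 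The compositions $\sigma_r^{(j)}\sigma_r^{(i)}$ with $i,j\ge 2$ displace $x_1$ into interior slots and act on pairs of variables far from the first two, so the vanishing is not a slice statement in $(x_1,x_2)$. The paper proves the genuinely $r$-variable quadratic relations $p\big|(\sigma_r^{(j)}\sigma_r^{(i)}+\sigma_r^{(i)}\sigma_r^{(j-1)})=0$ for $r-1\ge i\ge j\ge 2$ (equation \eqref{eq3_10}), with separate case analyses for $i>j$ and $i=j$, and only the sum over that particular range of $(i,j)$ telescopes to zero; one must also check that the totally odd projection commutes harmlessly with the composition. This part of your proposal is under-detailed rather than wrong, but calling it ``comparatively routine'' understates it.

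The genuine gap is injectivity. Your reduction to ``$p=R(p)$ with $p\in\W_{N,r}$ forces $p=0$'' is correct (it is exactly $\pi_1(\W_{N,r})\cap\ker F_{N,r}=\{0\}$), but the triangularity argument you propose is never carried out, and you yourself identify the obstruction that makes it unlikely to work as stated: since the head exponent $m_1$ can be arbitrarily large and gets redistributed into interior slots, no monomial order makes $R$ strictly lower order, so there is no obvious surviving leading term. The paper's resolution is not an ordering at all but a symmetrization. It subtracts the two linear relations (membership in $\W_{N,r}$ and $F_{N,r}(\pi_1 p)=0$) to obtain \eqref{eq3_28}, then sums the resulting identity over all bijections of $\{2,\ldots,r\}$ acting on the index slots $n_2,\ldots,n_r$. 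Pairing each permutation $\tau$ with the permutation $\tau'$ that transposes the values in positions $i$ and $i+1$ annihilates every $i\ge 2$ contribution via the antisymmetry $b_{n,n'}^{m}+b_{n',n}^{m}=0$, leaving only a strictly positive multiple of $a_{n_1,\ldots,n_r}$, whence $a_{n_1,\ldots,n_r}=0$. Without this symmetrization (or an actual construction of the refined order you invoke), the injectivity half of the theorem remains unproven.
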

\begin{proof}
We first check that $ F_{N,r}\big(\pi(p)\big)\in \ker E_{N,r}$ for any polynomial $p\in \W_{N,r}$.
Define the action $\sigma_r^{(i)}$ for a polynomial $f(x_1,\dots,x_r)$ and $i\in \{ 1,2,\ldots, r-1\}$ by
\[ f(x_1,\ldots,n_r) \big| \sigma_r^{(i)}=f(x_{i+1}-x_i,x_1,\ldots,\hat{x}_{i+1},\ldots,x_r) - f(x_{i+1}-x_i,x_1,\ldots,\hat{x}_i,\ldots,x_r).\]
Note that any polynomial $p$ in $\W_{N,r}$ satisfies $p\big| (1+\sigma_r^{(1)}) =0$ (by definition), and then we have
\begin{equation}\label{eq3_9}
p(x_1,\ldots,x_r)+p(x_2,x_1,x_3,\ldots,x_r)=0.
\end{equation}
We now prove for $p\in \W_{N,r}$ 
\begin{equation}\label{eq3_10}
 p(x_1,\ldots,x_r) \big| (\sigma_r^{(j)} \sigma_r^{(i)}+\sigma_r^{(i)}\sigma_r^{(j-1)}) =0 \quad ( r-1\ge i\ge j \ge2),
\end{equation}
where $p \big| (\sigma_r^{(j)} \sigma_r^{(i)}+\sigma_r^{(i)}\sigma_r^{(j-1)})$ means $(p\big| \sigma_r^{(j)}) \big| \sigma_r^{(i)} + (p\big| \sigma_r^{(i)}) \big| \sigma_r^{(j-1)}$.
For $r-1\ge i>j \ge2$ one computes
\begin{align}
\notag&p(x_1,\ldots,x_r)  \big| \sigma_r^{(j)} \sigma_r^{(i)} \\
\label{b1}&= p(x_j-x_{j-1},x_{i+1}-x_i,x_1,\ldots,\hat{x}_j,\ldots,\hat{x}_{i+1},\ldots,x_r)\\
\label{b2}&- p(x_j-x_{j-1},x_{i+1}-x_i,x_1,\ldots,\hat{x}_{j-1},\ldots,\hat{x}_{i+1},\ldots,x_r)\\
\label{b3}&-p(x_j-x_{j-1},x_{i+1}-x_i,x_1,\ldots,\hat{x}_j,\ldots,\hat{x}_i,\ldots,x_r)\\
\label{b4}&+p(x_j-x_{j-1},x_{i+1}-x_i,x_1,\ldots,\hat{x}_{j-1},\ldots,\hat{x}_i ,\ldots,x_r),
\end{align}
and
\begin{align}
\notag&p(x_1,\ldots,x_r)  \big| \sigma_r^{(i)}\sigma_r^{(j-1)}\\
\label{b5}&=p(x_{i+1}-x_i,x_j-x_{j-1},x_1,\ldots,\hat{x}_j,\ldots,\hat{x}_{i+1},\ldots,x_r)\\
\label{b6}&-p(x_{i+1}-x_i,x_j-x_{j-1},x_1,\ldots,\hat{x}_j,\ldots,\hat{x}_i,\ldots,x_r)\\
\label{b7}&-p(x_{i+1}-x_i,x_j-x_{j-1},x_1,\ldots,\hat{x}_{j-1},\ldots,\hat{x}_{i+1},\ldots,x_r)\\
\label{b8}&+p(x_{i+1}-x_i,x_j-x_{j-1},x_1,\ldots,\hat{x}_{j-1},\ldots,\hat{x}_i,\ldots,x_r).
\end{align}
From \eqref{eq3_9} it follows $\eqref{b1}+\eqref{b5}=\eqref{b2}+\eqref{b7}=\eqref{b3}+\eqref{b6}=\eqref{b4}+\eqref{b8}=0$.
Thus we have \eqref{eq3_10} for $r-1\ge i>j \ge2$.
When $j=i$, one can check
\begin{align}
\notag&p(x_1,\ldots,x_r)  \big| \sigma_r^{(j)} \sigma_r^{(j)} \\
\label{a1}&=  p(x_j-x_{j-1},x_{j+1}-x_j,x_1,\ldots,\hat{x}_j,\hat{x}_{j+1},\ldots,x_r)\\
\label{a2}&- p(x_j-x_{j-1},x_{j+1}-x_j,x_1,\ldots,\hat{x}_{j-1},x_j,\hat{x}_{j+1},\ldots,x_r)\\
\label{a3}&-p(x_{j+1}-x_{j-1},x_{j+1}-x_j,x_1,\ldots,\hat{x}_j,\hat{x}_{j+1},\ldots,x_r)\\
\label{a4}&+p(x_{j+1}-x_{j-1},x_{j+1}-x_j,x_1,\ldots,\hat{x}_{j-1},\hat{x}_j ,\ldots,x_r),
\end{align}
and
\begin{align}
\notag&p(x_1,\ldots,x_r)  \big| \sigma_r^{(j)}\sigma_r^{(j-1)}\\
\label{a5}&=p(x_{j+1}-x_{j-1},x_j-x_{j-1},x_1,\ldots,\hat{x}_j,\hat{x}_{j+1},\ldots,x_r)\\
\label{a6}&-p(x_{j+1}-x_{j-1},x_j-x_{j-1},x_1,\ldots,\hat{x}_{j-1},\hat{x}_{j},\ldots,x_r)\\
\label{a7}&-p(x_{j+1}-x_j,x_j-x_{j-1},x_1,\ldots,\hat{x}_{j-1},x_j,\hat{x}_{j+1},\ldots,x_r)\\
\label{a8}&+p(x_{j+1}-x_j,x_j-x_{j-1},x_1,\ldots,\hat{x}_{j-1},\hat{x}_j,\ldots,x_r).
\end{align}
From \eqref{eq3_9}, it follows $\eqref{a2}+\eqref{a7}=0$.
For the relation $p\big| (1+\sigma_r^{(1)})=0$, the substitutions $x_1\rightarrow x_j-x_{j-1},x_2\rightarrow x_{j+1}-x_j, x_3\rightarrow x_1,\ldots,x_{j+1}\rightarrow x_{j-1}$ shows $\eqref{a1}+\eqref{a3}+\eqref{a5}=0$, and the substitutions $x_1\rightarrow x_{j+1}-x_{j-1},x_2\rightarrow x_{j+1}-x_j, x_3\rightarrow x_1,\ldots,x_{j}\rightarrow x_{j-2}$ together with \eqref{eq3_9} lead to $\eqref{a4}+\eqref{a6}+\eqref{a8}=0$.
Thus we have \eqref{eq3_10}.
From this, putting $\sigma_r=\sigma_r^{(1)}+\cdots +\sigma_r^{(r-1)}$ one can obtain
\begin{equation}\label{eq3_27}
\begin{aligned}
p\big| \sigma_r \big| \big(1+\sigma_r \big)&= p\big| \big( \sigma_{r}^{(2)}+\cdots + \sigma_{r}^{(r-1)}\big) \big|\sigma_r\\
&=\sum_{r-1\ge i\ge j \ge2}  p\big|\big(\sigma_{r}^{(j)}\sigma_{r}^{(i)}+\sigma_{r}^{(i)}\sigma_{r}^{(j-1)}\big)=0,
\end{aligned}
\end{equation}
where for the first equality we have used $p\big| (1+\sigma_r^{(1)}) =0$.
We now compute the coefficient of $x_1^{n_1-1}\cdots x_r^{n_r-1}$ in $p\big| \sigma_r \big| \big(1+\sigma_r \big)$.
Notice that by \eqref{eq3_1} one has $x_1^{m_1-1}\cdots x_r^{m_r-1} \big| \big(1+\sigma_r\big)=x_1^{m_1-1}\cp(x_1^{m_2-1}\cdots x_{r-1}^{m_r-1})$ (see also the proof of Lemma~\ref{3_1}).
Thus for every polynomial $p(x_1,\ldots,x_r)=\sum_{(n_1,\ldots,n_r)\in \Set_{N,r}} a_{n_1,\ldots,n_r} x_1^{n_1-1}\cdots x_r^{n_r-1}$ one can compute
\begin{align*}
&p\big| \sigma_r \big| \big(1+\sigma_r\big) \\
&= \sum_{(m_1,\ldots,m_r)\in \Set_{N,r}} a_{m_1,\ldots,m_r} \sum_{t_1+\cdots+t_r=N} \big( e\tbinom{m_1,\ldots,m_r}{t_1,\ldots,t_r} -\delta\tbinom{m_1,\ldots,m_r}{t_1,\ldots,t_r}\big) x_1^{t_1-1}\cdots x_r^{t_r-1} \big| \big(1+\sigma_r\big)\\
&=\sum_{n_1+\cdots+n_r=N} \sum_{(m_1,\ldots,m_r)\in \Set_{N,r}} a_{m_1,\ldots,m_r}\\
&\times\Big( \sum_{t_1+\cdots+t_r=N} \big( e\tbinom{m_1,\ldots,m_r}{t_1,\ldots,t_r} -\delta\tbinom{m_1,\ldots,m_r}{t_1,\ldots,t_r}\big)  e\tbinom{t_1,\ldots,t_r}{n_1,\ldots,n_r} \Big)x_1^{n_1-1}\cdots x_r^{n_r-1}.
\end{align*}
Recalling the computation in the proof of Lemma~\ref{3_1}, we have known $x_1^{m_1-1}\cdots x_r^{m_r-1} \big| \sigma_r^{(i)}=\sum_{n_1+\cdots+n_r=N} \delta\tbinom{m_2,\ldots,m_i,m_{i+2},\ldots,m_r}{n_1,\ldots,n_{i-1},n_{i+2},\ldots,n_r} b_{n_i,n_{i+1}}^{m_1} x_1^{n_1-1}\cdots x_r^{n_r-1}$, where $N=m_1+\cdots+m_r$, and hence for $1\le i,j\le r-1$
\begin{align*}
& x_1^{m_1-1}\cdots x_r^{m_r-1} \big| \sigma_r^{(j)} \big| \sigma_r^{(i)} \\
&= \sum_{n_1+\cdots+ n_r=N}\Big( \sum_{t_1+\cdots+ t_r=N} \delta\tbinom{m_2,\ldots,m_j,m_{j+2},\ldots,m_r}{t_1,\ldots,t_{j-1},t_{j+2},\ldots,t_r} b_{t_j,t_{j+1}}^{m_1}  \\
&\times\delta\tbinom{t_2,\ldots,t_i,t_{i+2},\ldots,t_r}{n_1,\ldots,n_{i-1},n_{i+2},\ldots,n_r} b_{n_i,n_{i+1}}^{t_1} \Big) x^{n_1-1}\cdots x_r^{n_r-1}.
\end{align*}
Here $\delta\tbinom{m_2,\ldots,m_j,m_{j+2},\ldots,m_r}{t_1,\ldots,t_{j-1},t_{j+2},\ldots,t_r} \delta\tbinom{t_2,\ldots,t_i,t_{i+2},\ldots,t_r}{n_1,\ldots,n_{i-1},n_{i+2},\ldots,n_r}$ is 0 if $(t_1,\ldots,t_r)\not\in \Set_{N,r}$ and $(m_1,\ldots,m_r)$, $(n_1,\ldots,n_r)\in \Set_{N,r}$.
This shows that for $(n_1,\ldots,n_r)\in \Set_{N,r}$ the coefficient of $x_1^{n_1-1}\cdots x_r^{n_r-1}$ in $p\big| \sigma_r\big| \big(1+\sigma_r\big)$ is given by 
\[ \sum_{(m_1,\ldots,m_r)\in \Set_{N,r}} a_{m_1,\ldots,m_r}\Big( \sum_{(t_1,\ldots,t_r)\in \Set_{N,r}} \big( e\tbinom{m_1,\ldots,m_r}{t_1,\ldots,t_r} -\delta\tbinom{m_1,\ldots,m_r}{t_1,\ldots,t_r}\big)  e\tbinom{t_1,\ldots,t_r}{n_1,\ldots,n_r} \Big), \]
which is exactly the $(n_1,\ldots,n_r)$-th entry of the row vector $E_{N,r}\big( F_{N,r}\big(\pi(p)\big)\big)$.
As a result, \eqref{eq3_27} implies $E_{N,r}\big( F_{N,r}\big(\pi(p)\big)\big)=0$.

Finally, we prove the injectivity of the map $ F_{N,r}\circ \pi$.
It suffices to check that 
\[ \pi\big( \W_{N,r} \big) \cap \ker  F_{N,r}=\{0\} .\]
Set $\pi(p)=(a_{n_1,\ldots,n_r})_{(n_1,\ldots,n_r)\in \Set_{N,r}}$ for $p\in \W_{N,r}$.
For each $(n_1,\ldots,n_r)\in \Set_{N,r}$, the assumption $p\in \W_{N,r}$ gives the relation
\[ \sum_{(m_1,\ldots,m_r)\in \Set_{N,r}}\Big(\delta\tbinom{m_1,\ldots,m_r}{n_1,\ldots,n_r}+ \delta\tbinom{m_3,\ldots,m_r}{n_3,\ldots,n_r} b_{n_1,n_2}^{m_1} \Big) a_{m_1,\ldots,m_r} = 0,\]
and $ F_{N,r}\big(\pi(p)\big)=0$ induces the relation
\begin{align*}
\sum_{(m_1,\ldots,m_r)\in \Set_{N,r}} & \Big( \sum_{i=1}^{r-1} \delta\tbinom{m_2,\ldots,m_i,m_{i+2}\ldots,m_r}{n_1,\ldots,n_{i-1},n_{i+2},\ldots,n_r}b_{n_i,n_{i+1}}^{m_1}\Big)  a_{m_1,\ldots,m_r}=0.
\end{align*}
Subtracting the above two relations, one has
\begin{align}\label{eq3_28}
\sum_{(m_1,\ldots,m_r)\in \Set_{N,r}} & \Big( -\delta\tbinom{m_1,\ldots,m_r}{n_1,\ldots,n_r}+ \sum_{i=2}^{r-1}  \delta\tbinom{m_2,\ldots,m_i,m_{i+2}\ldots,m_r}{n_1,\ldots,n_{i-1},n_{i+2},\ldots,n_r}b_{n_i,n_{i+1}}^{m_1}\Big)  a_{m_1,\ldots,m_r}=0.
\end{align}
Furthermore, by definition of the space $\W_{N,r}$, every relation satisfied by $a_{m_1,\ldots,m_r}$ does not depend on the choices of $(m_3,\ldots,m_r)\in \Set_{n,r-2}$ where $n=m_3+\cdots+m_r$, so that the relation \eqref{eq3_28} can be reduced to
\begin{align*}
\sum_{(m_1,m_2)\in \Set_{N-n,2}}  \Big( \delta\tbinom{m_1,\ldots,m_r}{n_1,\ldots,n_r}- \sum_{i=2}^{r-1} \delta\tbinom{m_2,\ldots,m_{i},m_{i+2},\ldots,m_r}{n_1,\ldots,n_{i-1},n_{i+2},\ldots,n_r}b_{n_i,n_{i+1}}^{m_1}\Big)  a_{m_1,\ldots,m_r}=0.
\end{align*}
Denote by $\alpha (n_1,\ldots,n_r;m_3,\ldots,m_r)$ the left-hand side of the above equation and ${\rm bij}\{ 2,\ldots,r\}$ the set of all bijections on the set $\{2,\ldots,r\} $.
Consider
\begin{align*} 
&{\mathfrak f}(m_3,\ldots,m_r):=\sum_{\sigma\in {\rm bij}\{2,\ldots,r\}} \alpha (n_1,n_{\sigma(2)},n_{\sigma(3)},\ldots,n_{\sigma(r)};m_3,\ldots,m_r)\\
&= \sum_{(m_1,m_2)\in \Set_{N-n,2}}\sum_{\sigma\in {\rm bij}\{2,\ldots,r\}} \delta\tbinom{m_1,m_2,\ldots,m_r}{n_1,n_{\sigma(2)},\ldots,n_{\sigma(r)}} a_{m_1,\ldots,m_r} \\
&- \sum_{(m_1,m_2)\in \Set_{N-n,2}}  \sum_{i=2}^{r-1}    \Big(\sum_{\sigma\in {\rm bij}\{2,\ldots,r\}} \delta\tbinom{m_2,m_3,\ldots,m_{i},m_{i+2},\ldots,m_r}{n_1,n_{\sigma(2)},\ldots,n_{\sigma(i-1)},n_{\sigma(i+2)},\ldots,n_{\sigma(r)}}b_{n_{\sigma(i)},n_{\sigma(i+1)}}^{m_1}  \Big)a_{m_1,\ldots,m_r},
\end{align*}
which is 0 for any $(m_3,\ldots,m_r)\in \Set_{n,r-2}$.
Notice that for each $\tau \in {\rm bij}\{2,\ldots,r\}$ there exists a unique $\tau' \in  {\rm bij}\{2,\ldots,r\}$ such that $\tau(j)=\tau'(j)$ for $j\in\{2,\ldots,i-1,i+2,\ldots,r\}$, and $\tau(i)=\tau'(i+1),\ \tau(i+1)=\tau'(i)$.
This pairing, together with \eqref{eqan}, provides 
\begin{align*}
& \delta\tbinom{m_2,m_3,\ldots,m_{i},m_{i+2},\ldots,m_r}{n_1,n_{\sigma(2)},\ldots,n_{\tau(i-1)},n_{\tau(i+2)},\ldots,n_{\tau(r)}}b_{n_{\tau(i)},n_{\tau(i+1)}}^{m_1} \\
&+ \delta\tbinom{m_2,m_3,\ldots,m_{i},m_{i+2},\ldots,m_r}{n_1,n_{\tau'(2)},\ldots,n_{\tau'(i-1)},n_{\tau'(i+2)},\ldots,n_{\tau'(r)}}b_{n_{\tau'(i)},n_{\tau'(i+1)}}^{m_1}  =0.
\end{align*}
Then, for each $i\in\{2,\ldots,r-1\}$, we have
\[ \Big(\sum_{\tau\in {\rm bij}\{2,\ldots,r\}} \delta\tbinom{m_2,m_3,\ldots,m_{i},m_{i+2},\ldots,m_r}{n_1,n_{\tau(2)},\ldots,n_{\tau(i-1)},n_{\tau(i+2)},\ldots,n_{\tau(r)}}b_{n_{\tau(i)},n_{\tau(i+1)}}^{m_1}  \Big)a_{m_1,\ldots,m_r}=0,\]
and hence,
\[{\mathfrak f}(m_3,\ldots,m_r) =\sum_{(m_1,m_2)\in \Set_{N-n,2}}\sum_{\tau\in {\rm bij}\{2,\ldots,r\}} \delta\tbinom{m_1,m_2,\ldots,m_r}{n_1,n_{\tau(2)},\ldots,n_{\tau(r)}} a_{m_1,\ldots,m_r}.\]
Letting $m_i= n_i$ for all $i\in \{3,\ldots,r\}$, we obtain
\begin{align*}
0&={\mathfrak f}(n_3,\ldots,n_r)\\
&=\sum_{(m_1,m_2)\in \Set_{N-n,2}} \sum_{\tau\in {\rm bij}\{2,\ldots,r\}} \delta\tbinom{m_1,m_2,n_3,\ldots,n_r}{n_1,n_{\tau(2)},n_{\tau(3)},\ldots,n_{\tau(r)}} a_{m_1,m_2,n_3,\ldots,n_r}\\
&=\Big( \sum_{\tau\in {\rm bij}\{2,\ldots,r\}} \delta\tbinom{n_3,\ldots,n_r}{n_{\tau(3)},\ldots,n_{\tau(r)}} \Big)a_{n_1,\ldots,n_r},
\end{align*}
which shows $a_{n_1,\ldots,n_r}=0$ for all $(n_1,\ldots,n_r)\in \Set_{N,r}$.
This completes the proof of Theorem~\ref{3_7}.
\end{proof}

As a corollary, the following inequality is immediate from \eqref{eq3_6}.
\begin{corollary}\label{3_8}
For each integers $r\ge3$, we have
\begin{equation*}
 \sum_{N>0} \dim_{\Q} \ker E_{N,r}x^N \ge\cusp(x)\cdot \odd(x)^{r-2} . 
\end{equation*}
\end{corollary}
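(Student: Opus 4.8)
The plan is to obtain the corollary as a direct consequence of Theorem~\ref{3_7} together with the exact dimension formula \eqref{eq3_6}. All of the substantive work has already been done in establishing the injectivity of the map $F_{N,r}\circ \pi_1$; what remains is a purely formal dimension count, transported to the level of generating functions.

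First I would fix an integer $r\ge3$ and an arbitrary $N>0$, and invoke Theorem~\ref{3_7}. That theorem supplies an injective $\Q$-linear map $\W_{N,r}\to \ker E_{N,r}$, namely $p\mapsto F_{N,r}(\pi_1(p))$. Since this is an injection between finite-dimensional $\Q$-vector spaces, it yields at once the inequality
\[ \dim_{\Q}\W_{N,r}\le \dim_{\Q}\ker E_{N,r}. \]

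Next I would pass to generating functions. Multiplying the inequality above by $x^N$ and summing over all $N>0$, using the coefficient-wise ordering of power series fixed in the statement of Theorem~\ref{1_4}, gives
\[ \sum_{N>0}\dim_{\Q}\W_{N,r}\,x^N \le \sum_{N>0}\dim_{\Q}\ker E_{N,r}\,x^N. \]
Finally, substituting the evaluation \eqref{eq3_6}, which asserts $\sum_{N>0}\dim_{\Q}\W_{N,r}\,x^N=\cusp(x)\cdot\odd(x)^{r-2}$, on the left-hand side produces exactly the claimed lower bound.

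There is essentially no obstacle in this argument: the entire arithmetic content of the corollary is encoded in the injectivity proved in Theorem~\ref{3_7}, and the steps here merely repackage that injectivity as a weight-by-weight inequality of dimensions and then assemble those inequalities into a single statement about the generating series. I would only note that the conversion is valid because both $\W_{N,r}$ and $\ker E_{N,r}$ vanish unless $N\equiv r\pmod 2$, so no spurious coefficients interfere when comparing the two power series term by term.
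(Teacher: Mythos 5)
Your proposal is correct and matches the paper's own argument exactly: the corollary is obtained by combining the injectivity in Theorem~\ref{3_7} with the dimension formula \eqref{eq3_6} and summing the resulting weight-by-weight inequalities into a generating series. Nothing is missing.
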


{\it Remark}.
Conjecturally, the dimension of $\ker E_{N,r}$ coincides with the coefficient of $x^N$ in $\cusp(x)\cdot\odd (x)^{r-2}$.
Therefore, we may expect that $ F_{N,r}$ gives a bijection from $\pi(\W_{N,r})$ to $\ker E_{N,r}$.
We have a computational evidence up to $N=35$ for this expectation, which was checked by direct calculations using Mathematica (the verification has been completed within a few days).

\section{Proof of Theorem~\ref{1_4}}
This section is devoted to proving the inequality
\begin{equation}\label{eq4_1}
 \dim_{\Q}\ker C_{N,4} \ge \dim_{\Q} \ker E_{N,4}+\sum_{1<n<N} \bigg(\dim_{\Q} \ker  E_{N-n,3} + \rank E_{n,2} \cdot \dim_{\Q} \ker E_{N-n,2} \bigg).
\end{equation}
This inequality, together with Corollaries~\ref{3_6} and \ref{3_8}, provides
\[\sum_{N>0} \dim_{\Q} \ker C_{N,4}x^N \ge 3\cusp (x) \cdot \odd(x)^2 -\cusp(x)^2,\]
from which Theorem~\ref{1_4} follows.
Thus, it suffices to show the inequality \eqref{eq4_1}.

An essential idea for proving \eqref{eq4_1} is as follows.
It can be shown that the space of totally odd MZVs $\Zsp^{\rm odd}:=\bigoplus_{N,r\ge0}\Zsp_{N,r}^{\rm odd}$ has the structure of a bigraded $\Q$-algebra with respect to the series shuffle product developed by Hoffman~\cite{H} (e.g. $\zeta_\depth(n_1)\zeta_\depth(n_2)=\zeta_\depth(n_1,n_2)+\zeta_\depth(n_2,n_1)$).
The linear relations among totally odd MZVs of weight $N$ and depth $r$ obtained from multiplying the linear relations of $\ker \tr E_{N',r'}$ by totally odd MZVs of weight $N-N'$ and depth $r-r'$ for $(N',r')\le(N,r)$ should sit in $\ker \tr C_{N,r}$, because we are believing that $\ker \tr C_{N,r}\ (\supset \ker\tr E_{N,r})$ provide all linear relations among totally odd MZVs of weight $N$ and depth $r$.
For example, we can obtain $\dim_{\Q} \ker \tr C_{18,4}=3$, and observe that the coefficient vectors of three relations \eqref{eg3}, $\zeta_\depth(3)\times$\eqref{eg2} and $\zeta_\depth(3,3)\times$\eqref{eg1} of weight 18 and depth 4 generate the space $\ker \tr C_{18,4}$.

We prove first that multiplying by totally odd MZVs gives an injection from the space $\ker \tr E_{N',r'}$ for $(N',r')\le(N,4)$ to the space $\ker\tr C_{N,4}$ (Proposition~\ref{4_2}), and then their intersections are discussed.

\subsection{Shuffle algebra}
Since we only need the algebraic structure of $\Zsp^{\rm odd}$, consider the non-commutative polynomial algebra ${\mathbf F}$ over $\Q$ with one generator $z_{2i+1}$ in every degree $2i+1$ for $i\ge1$:
 \[ {\mathbf F}:= \Q\langle z_3,z_5,z_7,\ldots \rangle.\]
The bigraded piece of ${\mathbf F}$ of weight $N$ and depth $r$, which is the $\Q$-vector space spanned by the words $z_{n_1}\cdots z_{n_r}$ for all $(n_1,\ldots,n_r)\in \Set_{N,r}$, is denoted by $ {\mathbf F}_{N,r}$.
The empty word is regarded as $1\in\Q$.
The space ${\mathbf F}$ has the structure of a commutative, bigraded algebra over $\Q$ with respect to the shuffle product $\sh$:
 \begin{equation}\label{eq4_2}
 z_{n_1}\cdots z_{n_r} \ \sh \ z_{n_{r+1}} \cdots z_{n_{r+s}} = \sum_{\substack{\sigma\in {\mathfrak S}_{r+s}\\ \sigma(1)<\cdots <\sigma(r)\\ \sigma(r+1)<\cdots<\sigma(r+s) }} z_{n_{\sigma^{-1}(1)}} \cdots z_{n_{\sigma^{-1}(r+s)}},
 \end{equation}
where $\mathfrak{S}_n$ is the $n$-th symmetric group.
We notice that the surjective linear map from ${\mathbf F}$ to $\Zsp^{\rm odd}$ given by $z_{n_1}\cdots z_{n_r}\mapsto \zeta_{\depth}(n_1,\ldots,n_r)$ becomes an algebra homomorphism.

\subsection{Key identities} 

Define the linear map $\rho(=\rho^{(N,r)}) : \V_{N,r}\rightarrow {\mathbf F}_{N,r}$ given by
\begin{align*}
\rho : \V_{N,r} &\longrightarrow {\mathbf F}_{N,r}\\
(a_{n_1,\ldots,n_r})_{(n_1,\ldots,n_r)\in \Set_{N,r}} &\longmapsto \sum_{(n_1,\ldots,n_r)\in \Set_{N,r} } a_{n_1,\ldots,n_r} z_{n_1}\cdots z_{n_r},
\end{align*}
which by definition is an isomorphism.
For each word $w\in{\mathbf F}_{N,r}$, we define the $\Q$-linear map $\stuffle_{w}$ on the space $\V$, which corresponds to the shuffle product on ${\mathbf F}$, by the composition map
\[ 
\begin{array}{cccccccc}
\stuffle_{w}:& \V_{N',r'} &\overset{\rho}{\longrightarrow} &{\mathbf F}_{N',r'} &\overset{\sh w}{\longrightarrow}& {\mathbf F}_{N+N',r+r'} &\stackrel{\rho^{-1}}{\longrightarrow}& \V_{N+N',r+r'}\\
&v &\longmapsto& \rho(v) &\longmapsto& \rho(v)\ \sh \ w& \longmapsto &\rho^{-1} (  \rho(v)\ \sh \ w ).
\end{array}
\]
For convenience, we write $\stuffle_{n_1,\ldots,n_r}=\stuffle_{z_{n_1}\cdots z_{n_r}}$.
We note that since the algebra ${\mathbf F}$ is an integral domain, the map $\stuffle_{n_1,\ldots,n_r}$ becomes an injection.
One finds that the map $\stuffle$ provides an injection from $\ker \tr E_{N',r'}$ to $\ker \tr C_{N,4}$ for $(N',r')<(N,4)$:
\begin{proposition}\label{4_2}
(i) For each odd integer $p\ge3$, the image of $\ker\tr E_{N-p,3}$ under the injective map $\stuffle_p$ is contained in $\ker\tr C_{N,4}$:
\begin{equation*}
\stuffle_p :  \ker\tr E_{N-p,3}\longrightarrow \ker \tr C_{N,4} .
\end{equation*}
(ii) For each even integer $p\ge6$ and $(p_1,p_2)\in \Set_{p,2}$, the image of $\ker\tr E_{N-p,2}$ under the injective map $\stuffle_{p_1,p_2}$ is contained in $\ker\tr C_{N,4}$:
\begin{equation*}
  \stuffle_{p_1,p_2} : \ker\tr E_{N-p,2} \longrightarrow \ker \tr C_{N,4} .
\end{equation*}
\end{proposition}

Proposition~\ref{4_2} is shown by certain explicit formulas described in Lemma~\ref{4_1} below.
For $(p_1,\ldots,p_{r-q})\in \Set_{p,r-q}$ let us define the injective linear map $\inj_{p_1,\ldots,p_{r-q}}$ on $\V$ by
\begin{align*}
\inj_{p_1,\ldots,p_{r-q}}:\V_{N-p,q}&\longrightarrow \V_{N,r},\\
(a_{n_1,\ldots,n_{q}})_{(n_1,\ldots,n_q)\in \Set_{N-p,q}} & \longmapsto \bigl(\delta\tbinom{p_1,\ldots,p_{r-q}}{n_1,\ldots,n_{r-q}}\cdot a_{n_{r-q+1},\ldots,n_r}\bigr)_{(n_1,\ldots,n_r)\in \Set_{N,r}}.
\end{align*}
By definition \eqref{eq3_4}, for $q\in\{2,\ldots,r-1\}$ the matrix $E_{N,r}^{(q)}$ can be expressed as the direct sum of the form
\begin{align*}
 E_{N,r}^{(q)} &= \bigoplus_{\substack{1<p<N\\(p_1,\ldots,p_{r-q})\in \Set_{p,r-q}}} E_{N-p,q} \\
 &= diag(\underbrace{E_{3q,q},\ldots,E_{3q,q}}_{|\Set_{N-3q,r-q}|},\underbrace{E_{3q+2,q},\ldots,E_{3q+2,q}}_{|\Set_{N-3q-2,r-q}|},\ldots, E_{N-3(r-q),q}),
\end{align*}
and hence we have 
\begin{equation}\label{eq4_3} \ker \tr E_{N,r}^{(q)}  = \bigoplus_{\substack{1<p<N\\(p_1,\ldots,p_{r-q})\in \Set_{p,r-q}}} \inj_{p_1,\ldots,p_{r-q}} \big( \ker\tr E_{N-p,q} \big).
\end{equation}

\begin{lemma}\label{4_1}
(i) For each odd integer $p\ge3$ and $v\in \ker \tr E_{N-p,3}$, we have
\begin{equation*}
\tr E_{N,4}\bigl( \stuffle_p (v)\bigr)=\inj_p ( v) \in \ker \tr E_{N,4}^{(3)} .
\end{equation*}
(ii) For each even integer $p\ge6$, $(p_1,p_2)\in \Set_{p,2}$ and $v\in \ker \tr E_{N-p,2}$, we have
\begin{equation*}
\tr E_{N,4}^{(3)} \bigl( \tr E_{N,4}\bigl( \stuffle_{p_1,p_2} (v)\bigr)\bigr)=\sum_{(t_1,t_2)\in \Set_{p,2}}    e\tbinom{t_1,t_2}{p_1,p_2} \inj_{t_1,t_2} ( v) \in \ker \tr E_{N,4}^{(2)} .
\end{equation*}
\end{lemma}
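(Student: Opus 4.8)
The plan is to prove both identities by expanding everything in the explicit combinatorial description of $\ec$ furnished by Lemma~\ref{3_1}, and to read off the two kernel memberships directly from the decomposition \eqref{eq4_3}. Throughout write $v=(a_{\ldots})$. By the definition of $\pi_2$ and the shuffle product \eqref{eq4_2}, the map $\stuffle_w$ spreads $w$ over all of its shuffle placements; for a single letter this means that the $(m_1,m_2,m_3,m_4)$-component of $\stuffle_p(v)$ is
\[ \delta\tbinom{p}{m_1}a_{m_2,m_3,m_4}+\delta\tbinom{p}{m_2}a_{m_1,m_3,m_4}+\delta\tbinom{p}{m_3}a_{m_1,m_2,m_4}+\delta\tbinom{p}{m_4}a_{m_1,m_2,m_3}, \]
and for a two-letter word the component of $\stuffle_{p_1,p_2}(v)$ is the sum of the six analogous terms arising from the six shuffles of $z_{p_1}z_{p_2}$ into a depth-$2$ word.

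For (i) I would first dispose of the membership assertion: as $(p)\in\Set_{p,1}$ and $v\in\ker\tr E_{N-p,3}$, the vector $\inj_p(v)$ is a direct summand of $\ker\tr E_{N,4}^{(3)}$ by \eqref{eq4_3} (with $r=4$, $q=3$), so only the equality $\tr E_{N,4}(\stuffle_p(v))=\inj_p(v)$ needs work. For this I would evaluate the $(n_1,n_2,n_3,n_4)$-component, namely $\sum_{(m_1,\ldots,m_4)\in\Set_{N,4}}\bigl(\stuffle_p(v)\bigr)_{(m_1,\ldots,m_4)}\,e\tbinom{n_1,n_2,n_3,n_4}{m_1,m_2,m_3,m_4}$, by inserting the four shuffle terms above together with the four-fold expansion of $e\tbinom{n}{m}$ from Lemma~\ref{3_1} (one diagonal $\delta$-term and three $b$-weighted terms). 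The diagonal term alone already reproduces all four shuffle contributions at $m=n$, so the crux is to show that the three unwanted ones ($p$ in slots $2,3,4$) are cancelled by the $b$-weighted contributions. This is where the two structural inputs enter: the antisymmetry $b_{n,n'}^{m}+b_{n',n}^{m}=0$ from \eqref{eqan}, and the hypothesis $\tr E_{N-p,3}(v)=0$, which via Lemma~\ref{3_1} in depth $3$ is a system of linear relations among the entries of $v$. One then checks that the surplus terms regroup into instances of these relations and so vanish, leaving exactly $\delta\tbinom{p}{n_1}a_{n_2,n_3,n_4}$, i.e.\ $\inj_p(v)$.

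For (ii) the membership in $\ker\tr E_{N,4}^{(2)}$ is again immediate from \eqref{eq4_3} (with $r=4$, $q=2$), since every $\inj_{t_1,t_2}(v)$ with $(t_1,t_2)\in\Set_{p,2}$ and $v\in\ker\tr E_{N-p,2}$ lies in the corresponding summand; the content is the displayed equality. The computation parallels (i) but has two layers: I would expand the six shuffle terms of $\stuffle_{p_1,p_2}(v)$, apply $\tr E_{N,4}$ and then $\tr E_{N,4}^{(3)}$ (the latter acting as the identity on the first slot and as a transposed depth-$3$ $E$-matrix on the last three, by \eqref{eq3_4}), and simplify with \eqref{eqan} together with the relations encoded by $\tr E_{N-p,2}(v)=0$ made explicit through Lemma~\ref{3_1}. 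The shape of the right-hand side is explained conceptually as follows: a single inserted letter is inert under the depth-$1$ Ihara action and is simply carried to the front, whereas the inserted pair $z_{p_1}z_{p_2}$ gets mixed through the depth-$2$ action, and the coefficients relating the front pair $(t_1,t_2)$ to $(p_1,p_2)$ are precisely the $e\tbinom{t_1,t_2}{p_1,p_2}$ appearing in front of $\inj_{t_1,t_2}(v)$.

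The step I expect to be the main obstacle is purely the combinatorial bookkeeping of these cancellations, most acutely in part (ii): correctly pairing off the many $b$-weighted terms produced by the two matrix applications and the six shuffles, and confirming that exactly those contributions not matching $\sum_{(t_1,t_2)\in\Set_{p,2}}e\tbinom{t_1,t_2}{p_1,p_2}\inj_{t_1,t_2}(v)$ are annihilated by the combined use of \eqref{eqan} and the kernel relations. This is why the verification is deferred to the Appendix; the polynomial reformulation from the proof of Theorem~\ref{3_7}, in which $E_{N,r}$ is represented by $1+\sigma_r$, can be used to organise the term-matching, but the count itself remains long.
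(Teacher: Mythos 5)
Your plan coincides with the paper's own Appendix proof: expand the shuffle components of $\stuffle_p(v)$ (four terms) and $\stuffle_{p_1,p_2}(v)$ (six terms), apply the explicit formula for $\ec$ from Lemma~\ref{3_1}, cancel the surplus $b$-weighted terms pairwise via \eqref{eqan} and the kernel relations $\tr E_{N-p,3}(v)=0$ resp.\ $\tr E_{N-p,2}(v)=0$, and read off the kernel memberships from the direct-sum decomposition \eqref{eq4_3}. This is exactly the route the paper takes (it writes out only part (ii) in full), so the proposal is correct and essentially identical in approach.
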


Combining Lemma~\ref{4_1} with Proposition~\ref{3_3} (that is $\tr C_{N,4}=\tr E_{N,4}\cdot \tr E_{N,4}^{(3)}\cdot\tr E_{N,4}^{(2)}$), we have the proof of Proposition~\ref{4_2}.

Although Lemma~\ref{4_1} plays a key role in the next subsection, its proof is postponed to the Appendix because it is done by tedious computations.
In what follows, we shall try to make an explanation of the formulas in Lemma~\ref{4_1} using the notion of Brown's operator $D_m$. 
It is hoped that we obtain more general formulas from this approach (see Remark below).

Recall the graded version of Brown's operator ${\rm gr}_r^{\depth} D_m$ on the depth-graded motivic MZVs (see \S2.3).
Using the formula of $D_m$ \eqref{key1}, one can easily obtain 
\begin{equation}\label{eq4_17}
{\rm gr}^{\depth}_rD_m (\zeta^{\m}_{\depth} (n_1,\ldots,n_r) ) =\sum_{(m_1,\ldots,m_r)\in \Set_{N,r} } \delta\tbinom{m_1}{m} \ec \zeta_{m_1} \otimes \zeta^{\m}_{\depth}(m_2,\ldots,m_r).
\end{equation}

We now define the operator $d_m$ for odd $m>1$, which is a formal version of ${\rm gr}^{\depth}_rD_m$.
For $z_{n_1}\cdots z_{n_r}\in {\mathbf F}_{N,r}$, let
\[ d_m(z_{n_1}\cdots z_{n_r}) = \sum_{(m_1,\ldots,m_r)\in \Set_{N,r} } \delta\tbinom{m_1}{m} \ec z_{m_1}\otimes z_{m_2}\cdots z_{m_r}\]
and $d_m(z_n)=\delta\tbinom{m}{n}z_n\otimes 1$.
Then we obtain the $\Q$-linear map $d_m : {\mathbf F}_{N,r}\rightarrow \Q z_m\otimes {\mathbf F}_{N-m,r-1}$ by linearity.
Set $d_{<N}:=\sum_{1<m<N:{\rm odd}} d_m$:
\[ d_{<N} : {\mathbf F}_{N,r}\longrightarrow {\mathbf F}_{N,r}^{(r-1)},\]
where we write ${\mathbf F}_{N,r}^{(r-1)}=\bigoplus_{1<m<N}{\mathbf F}_{m,1}\otimes {\mathbf F}_{N-m,r-1}$.
In general, we put for $2\le q\le r$
\[{\mathbf F}_{N,r}^{(q)} = \bigoplus_{\substack{1<p<N\\(p_1,\ldots,p_{r-q})\in \Set_{p,r-q}}} \underbrace{{\mathbf F}_{p_1,1}\otimes \cdots \otimes {\mathbf F}_{p_{r-q},1}}_{r-q} \otimes {\mathbf F}_{N-p,q} ,\]
which is isomorphic to ${\mathbf F}_{N,r}$.
Notice ${\mathbf F}_{N,r}^{(r)}={\mathbf F}_{N,r}$.
Define isomorphisms $\rho^{(q)}(=\rho^{(N,r,q)})$ for $2\le q\le r$ by
\begin{align*}
\rho^{(q)} : \V_{N,r}&\longrightarrow {\mathbf F}_{N,r}^{(q)}\\
(a_{n_1,\ldots,n_r})_{(n_1,\ldots,n_r)\in \Set_{N,r}} &\longmapsto \sum_{(n_1,\ldots,n_r)\in \Set_{N,r}} a_{n_1,\ldots,n_r} z_{n_1}\otimes \cdots \otimes z_{n_{r-q}} \otimes z_{n_{r-q+1}}\cdots z_{n_r}.
\end{align*}
Note $\rho^{(r)}=\rho$.
Set $d_{<N}^{(q)}:={\rm id}^{\otimes (r-q)}\otimes d_{<N}$ for $2\le q\le r$, so then $d_{<N}^{(r)}=d_{<N}$.
By definition, for $3\le q\le r$ one immediately finds that the following diagram commutes:
 $$ \xymatrix{  
 \V_{N,r}   \ar@{->}[r]^{\rho^{(q)}} \ar@{->}[d]^{\tr E_{N,r}^{(q)}} & {\mathbf F}_{N,r}^{(q)} \ar@{->}[d]^{d_{<N}^{(q)}} \\ 
  \V_{N,r}   \ar@{->}[r]^{\rho^{(q-1)}}  & {\mathbf F}_{N,r}^{(q-1)}  }$$
We notice that the linear map $C_{N,r}$ can be written as the composition map $(\rho^{(2)})^{-1}\circ d_{<N}^{(2)} \circ \cdots \circ d_{<N}^{(r-1)}\circ d_{<N}^{(r)}\circ \rho^{(r)}$: 
\[\rho^{(2)}\big(\tr C_{N,r}(v)\big) = d_{<N}^{(2)} \circ \cdots \circ d_{<N}^{(r-1)}\circ d_{<N}^{(r)}\big(\rho^{(r)}(v)\big).\]

We are expecting that the map $d_m$ is a derivation, i.e. $d_m(w_1\ \sh \ w_2)=d_m(w_1)\ \sh \ (1\otimes w_2) + d_m(w_2)\ \sh \ (1\otimes w_1)$, however we were not able to prove this (whereas we can prove that $d_m$ is a derivation for the shuffle product of iterated integrals, since the coefficient $\ec$ is obtained from this shuffle product).
One can at least check the following (so the proof is also by boring computations, we omit it):
\begin{align*}
\label{f1}& d_m(z_p\ \sh \ z_{n_1}z_{n_2}z_{n_3} ) = d_m(z_p) \ \sh \ (1\otimes z_{n_1}z_{n_2}z_{n_3}) + d_m(z_{n_1}z_{n_2}z_{n_3}) \ \sh \ (1\otimes z_p),\\
& d_m(z_{p_1}z_{p_2}\ \sh \ z_{n_1}z_{n_2} ) = d_m(z_{p_1}z_{p_2}) \ \sh \ (1\otimes z_{n_1}z_{n_2}) + d_m(z_{n_1}z_{n_2}) \ \sh \ (1\otimes z_{p_1}z_{p_2}).
\end{align*} 
Using the first identity, we have
\begin{align*} 
\tr E_{N,4} (\stuffle_p(v)) &= \big(\rho^{(3)}\big)^{-1}\circ d_{<N} \circ \rho(\stuffle_p(v)) = \big(\rho^{(3)}\big)^{-1}\circ d_{<N} (z_p \ \sh \ \rho(v) ) \\
&=  \big(\rho^{(3)}\big)^{-1}\big( d_{<N} (z_p) \ \sh \ (1\otimes \rho(v)) + d_{<N} (\rho(v)) \ \sh \ (1\otimes z_p)\big)\\
&= \big(\rho^{(3)}\big)^{-1}\big( z_p\otimes \rho(v)\big) = \inj_p(v),
\end{align*}
where for the last equality we have used $d_{<N} (\rho(v))=0$ which is equivalent to the assumption $\tr E_{N-p,3}(v)=0$.
This gives a proof of Lemma~\ref{4_1} (i).
We can prove the formula (ii) of Lemma~\ref{4_1} in a similar way.

{\it Remark}.
If $d_m$ becomes a derivation on ${\mathbf F}$, one can prove for $v\in\ker\tr E_{N-p,r-q}$ and $(p_1,\ldots,p_q)\in \Set_{p,q}$ 
\[ \tr\big( E_{N,r}E_{N,r}^{(r-1)}\cdots E_{N,r}^{(r-q+1)} \big) \big( \stuffle_{p_1,\ldots,p_q} (v) \big) = \sum_{(t_1,\ldots,t_q)\in \Set_{p,q}} c\tbinom{t_1,\ldots,t_q}{p_1,\ldots,p_q} \inj_{t_1,\ldots,t_q} (v)  ,\]
which implies that the image of the space $\ker\tr E_{N-p,r-q}$ under the injective map $\stuffle_{p_1,\ldots,p_q} $ is contained in $\ker\tr C_{N,r}$:
\[ \stuffle_{p_1,\ldots,p_q} : \ker\tr E_{N-p,r-q} \longrightarrow \ker\tr C_{N,r}.\]

\subsection{Proof of Theorem~\ref{1_4}}
Proposition~\ref{4_2} shows
\[ \ker\tr C_{N,4} \supset \ker\tr E_{N,4}+\sum_{1<p<N} \stuffle_p\big( \ker\tr E_{N-p,3} \big) + \sum_{\substack{1<p<N\\(p_1,p_2)\in \Set_{p,2}}} \stuffle_{p_1,p_2} \big( \ker\tr E_{N-p,2} \big). \]
For $1<p<N$ we now consider the $\Q$-vector space $\mathsf{A}_N^{(p)}\subset \V_{N,4}$ spanned by all elements of the form $\sum_{(p_1,p_2)\in \Set_{p,2}} a_{p_1,p_2}\stuffle_{p_1,p_2} (v)$, where $(a_{p_1,p_2})_{(p_1,p_2)\in \Set_{p,2}} \in {\rm Im}\, \tr E_{p,2}$ and $v\in \ker\tr E_{N-p,2}$:
\[ \mathsf{A}_N^{(p)}=\bigg\langle \sum_{(p_1,p_2)\in \Set_{p,2}} a_{p_1,p_2}\stuffle_{p_1,p_2} (v) \ \bigg| \ (a_{p_1,p_2})_{(p_1,p_2)\in \Set_{p,2}} \in {\rm Im}\, \tr E_{p,2},\ v\in \ker\tr E_{N-p,2} \bigg\rangle_{\Q}.\]
Our goal is to show the following: 
\begin{equation}\label{eq4_18}
 \ker\tr C_{N,4} \supset \ker\tr E_{N,4}\oplus \bigoplus_{1<p<N} \stuffle_p\big( \ker\tr E_{N-p,3} \big) \oplus \bigoplus_{1<p<N} \mathsf{A}_N^{(p)}.
\end{equation}
Before proving \eqref{eq4_18}, we show \eqref{eq4_1} assuming \eqref{eq4_18}.
From the injectivity of the map $\stuffle_p$, one has 
\begin{equation}\label{ts1} \dim_{\Q}\stuffle_p \big( \ker\tr E_{N-p,3} \big) = \dim_{\Q} \ker\tr E_{N-p,3}.\end{equation}
By using the fact that the algebra ${\mathbf F}$ is isomorphic to a certain polynomial algebra, one can prove the following lemma (the proof will be postponed to the Appendix).
\begin{lemma}\label{4_3}
Let ${\mathbf B}_1$ and ${\mathbf B}_2$ be subspaces of ${\mathbf F}_{N_1,2}$ and ${\mathbf F}_{N_2,2}$ respectively. 
Assume that ${\mathbf B}_1\cap {\mathbf B}_2 =\{0\}$.
Then the dimension of the space spanned by all elements composing of two products of $w_1\in{\mathbf B}_1$ and $w_2\in{\mathbf B}_2$ is equal to $\dim_{\Q} {\mathbf B}_1 \cdot \dim_{\Q} {\mathbf B}_2$:
\[ \dim_{\Q} \langle w_1\ \sh \ w_2 \mid w_1\in {\mathbf B}_1, w_2\in {\mathbf B}_2 \rangle_{\Q} =\dim_{\Q} {\mathbf B}_1 \cdot \dim_{\Q} {\mathbf B}_2.\]
\end{lemma}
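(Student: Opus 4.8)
The plan is to identify $(\mathbf F,\sh)$ with a polynomial algebra and to reduce the statement to a single injectivity assertion. By Radford's theorem, the shuffle algebra $(\mathbf F,\sh)=(\Q\langle z_3,z_5,\dots\rangle,\sh)$ is the free commutative polynomial algebra on the set of Lyndon words in the alphabet $\{z_3,z_5,\dots\}$; this is the polynomial algebra referred to in the statement. In particular $\mathbf F$ is an integral domain (indeed a UFD), and under this identification the shuffle product becomes ordinary multiplication of polynomials. The span $\langle w_1\sh w_2\mid w_1\in\mathbf B_1,\ w_2\in\mathbf B_2\rangle_\Q$ is exactly the image of the multiplication map $\mu\colon \mathbf B_1\otimes_\Q\mathbf B_2\to\mathbf F$, $w_1\otimes w_2\mapsto w_1\sh w_2$, so the claimed equality $\dim_\Q\langle w_1\sh w_2\rangle=\dim_\Q\mathbf B_1\cdot\dim_\Q\mathbf B_2$ is equivalent to the injectivity of $\mu$. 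First I would record this reformulation, noting that everything is bigraded by weight and depth, with $\mathbf B_i\subset\mathbf F_{N_i,2}$ and $\mathrm{Im}\,\mu\subset\mathbf F_{N_1+N_2,4}$.

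The injectivity of $\mu$ is assembled from two ingredients. The first is the elementary linear-algebra fact that if $\mathbf B_1,\mathbf B_2$ are subspaces of a vector space $U$ with $\mathbf B_1\cap\mathbf B_2=\{0\}$, then the symmetric-product map $\mathbf B_1\otimes\mathbf B_2\to\mathrm{Sym}^2 U$, $w_1\otimes w_2\mapsto w_1w_2$, is injective; this follows from the canonical splitting $\mathrm{Sym}^2(\mathbf B_1\oplus\mathbf B_2)=\mathrm{Sym}^2\mathbf B_1\oplus(\mathbf B_1\otimes\mathbf B_2)\oplus\mathrm{Sym}^2\mathbf B_2$, in which $\mathbf B_1\otimes\mathbf B_2$ sits as the middle summand. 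This is exactly where the hypothesis $\mathbf B_1\cap\mathbf B_2=\{0\}$ enters, and it is what rules out the symmetric-square collapse that makes the statement fail when $\mathbf B_1=\mathbf B_2$. The second, and main, ingredient is that multiplication induces an injection $\mathrm{Sym}^2(\mathbf F_{N,2})\hookrightarrow\mathbf F_{2N,4}$ when $N_1=N_2=N$ (and, when $N_1\neq N_2$, an injection $\mathbf F_{N_1,2}\otimes\mathbf F_{N_2,2}\hookrightarrow\mathbf F_{N_1+N_2,4}$ directly, since then $\mathbf B_1\cap\mathbf B_2=\{0\}$ is automatic). Composing the first map into $\mathrm{Sym}^2(\mathbf F_{N,2})$ with this injection yields the injectivity of $\mu$, and hence the lemma.

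To prove the second ingredient I would work in the monomial basis of $\mathbf F$ coming from the Lyndon generators. A depth-$2$ basis monomial is either a single depth-$2$ Lyndon generator or a product $z_az_b$ of two depth-$1$ generators, and a depth-$4$ monomial is a product of Lyndon generators of total depth $4$. Since $\mathbf F$ is a UFD, the Lyndon generators are distinct irreducibles, so products of monomials of different Lyndon type cannot coincide and the only possible collisions are between products of the form $z_az_b\cdot z_cz_d$. Keeping track of weights, such a collision amounts to two distinct partitions of the multiset $\{a,b,c,d\}$ into two pairs whose sums are the prescribed weights. The crux is then the combinatorial fact that a multiset of four parts admits at most one partition into two pairs of prescribed sum: if $\{a,b\}$ and $\{a,c\}$ both realize the weight $N_1$ then $b=c$, and one checks this forces the two partitions to coincide. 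This unique-factorization statement shows that distinct basis elements of $\mathrm{Sym}^2(\mathbf F_{N,2})$ (resp.\ of $\mathbf F_{N_1,2}\otimes\mathbf F_{N_2,2}$) map to distinct monomials of $\mathbf F_{N_1+N_2,4}$, whence injectivity. The main obstacle is precisely this second ingredient: one must organize the monomial bookkeeping, separating the Lyndon-type cases and carrying the two weights $N_1,N_2$ through, so that the easy pairing fact applies; the first ingredient and the appeal to Radford's theorem are then routine.
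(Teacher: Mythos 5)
Your proposal is correct and follows essentially the same route as the paper: both identify $(\mathbf{F},\sh)$ with the polynomial algebra on Lyndon words and reduce the statement to the linear independence in depth $4$ of products of the depth-$2$ Lyndon basis monomials, the only nontrivial case being products of four depth-one letters. Your $\mathrm{Sym}^2$ bookkeeping and the unique-pair-partition argument are a cleaner packaging of the paper's explicit $\mathfrak{S}_4$ case check and its appeal to ``standard linear algebra,'' but the underlying argument is the same.
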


From this lemma, one immediately obtains 
\begin{equation}\label{ts2} \dim_{\Q}\rho\big({\mathsf A}_N^{(p)}\big)  = \rank\tr E_{p,2}\cdot \dim_{\Q} \ker\tr E_{N-p,2}.\end{equation}
Then, the inequality \eqref{eq4_1} follows from \eqref{ts1}, \eqref{ts2} and \eqref{eq4_18}.
We now prove \eqref{eq4_18}.

\noindent
{\it Proof of \eqref{eq4_18}}.
For any element $v=\sum_{1<p<N} \stuffle_p(v_p) \in \sum_{1<p<N} \stuffle_p(\ker\tr E_{N-p,3})$, using Lemma~\ref{4_1} (i), one has 
\begin{align*}
\tr E_{N,4}(v) = \sum_{1<p<N}\tr E_{N,4}\big( \stuffle_p (v_p) \big) &= \sum_{1<p<N} \inj_p(v_p) \in \ker\tr E_{N,4}^{(3)} \\
&\bigg(= \bigoplus_{1<p<N} \inj_p\big(\ker\tr E_{N-p,3}\big), \ \mbox{by} \ \eqref{eq4_3}\bigg).
\end{align*}
Therefore, the injectivity of the map $\inj_p$ shows that $\tr E_{N,4}(v)=0$ if and only if $v_p=0$ for all $1<p<N$.
So then $v=0 \Rightarrow v_p=0$ for all $1<p<N$ implies
\[ \sum_{1<p<N} \stuffle_p(\ker\tr E_{N-p,3}) =\bigoplus_{1<p<N} \stuffle_p(\ker\tr E_{N-p,3}),\]
and $\tr E_{N,4}(v)=0 \Rightarrow v=0$ shows
\[ \ker\tr E_{N,4}\oplus \bigoplus_{1<p<N} \stuffle_p(\ker\tr E_{N-p,3}).\]

We fix a basis of $\ker\tr E_{N-p,2}$ denoted by $\{w_i^{(p)}\}_{i=1}^{k_p}$.
Then for any $v_p\in{\mathsf A}_N^{(p)}$ one can express 
\[ v_p=\sum_{i=1}^{k_p} \sum_{(p_1,p_2)\in \Set_{p,2}} a_{p_1,p_2}^{(i)} \stuffle_{p_1,p_2}\big( w_i^{(p)} \big)\]
with some elements $(a_{p_1,p_2}^{(i)})_{(p_1,p_2)\in \Set_{p,2}} \in {\rm Im}\, \tr E_{p,2}$ ($1\le i\le k_p$).
For any 
$$v=\sum_{1<p<N}\alpha_pv_p=\sum_{1<p<N}\alpha_p\sum_{i=1}^{k_p} \sum_{(p_1,p_2)\in \Set_{p,2}} a_{p_1,p_2}^{(i)} \stuffle_{p_1,p_2}\big( w_i^{(p)} \big)\in \sum_{1<p<N}{\mathsf A}_N^{(p)},$$
using Lemma~\ref{4_1} (ii), one has
\begin{align*}
\tr E_{N,4}^{(3)}\big(\tr E_{N,4} (v)\big) &= \sum_{1<p<N}\sum_{i=1}^{k_p} \sum_{(t_1,t_2)\in \Set_{p,2}} \alpha_p \bigg(\sum_{(p_1,p_2)\in \Set_{p,2}} a_{p_1,p_2}^{(i)}  e\tbinom{t_1,t_2}{p_1,p_2} \bigg) \inj_{t_1,t_2} \big( w_i^{(p)} \big)\\
&\in \ker\tr E_{N,4}^{(2)} \bigg( = \bigoplus_{\substack{1<p<N\\(t_1,t_2)\in \Set_{p,2}}} \inj_{t_1,t_2} \big( \ker\tr E_{N-p,2}\big), \ \mbox{by} \ \eqref{eq4_3} \bigg).
\end{align*}
For each $1<p<N$ the set $\{\inj_{t_1,t_2}\big(w_i^{(p)}\big)\mid 1\le i\le k_p, (t_1,t_2)\in \Set_{p,2}\}$ is linearly independent over $\Q$.
This shows that $\tr E_{N,4}^{(3)}\big( \tr E_{N,4}(v)\big) =0$ if and only if for all $1<p<N$, each coefficient of $\inj_{t_1,t_2} \big( w_i^{(p)} \big)$ for $(t_1,t_2)\in \Set_{p,2}$ and $1\le i\le k_p$, which is $\alpha_p\sum_{(p_1,p_2)\in \Set_{p,2}} a_{p_1,p_2}^{(i)}  e\tbinom{t_1,t_2}{p_1,p_2}$, is 0.
Since if $\sum_{(p_1,p_2)\in \Set_{p,2}} a_{p_1,p_2}^{(i)}  e\tbinom{t_1,t_2}{p_1,p_2}=0$ for all $(t_1,t_2)\in \Set_{p,2}$, then $0=(a_{p_1,p_2}^{(i)})_{(p_1,p_2)\in \Set_{p,2}}$ because $(a_{p_1,p_2}^{(i)})_{(p_1,p_2)\in \Set_{p,2}} \in \ker\tr E_{p,2}\cap {\rm Im}\tr E_{p,2}=\{0\}$, it turns out to be $v_p=0$ whenever $\alpha_p\neq0$.
Therefore, assuming $0=v=\sum w_p \in \sum {\mathsf A}_N^{(p)}$, we have $w_p=0$ for each $1<p<N$.
This gives
\[ \sum_{1<p<N} {\mathsf A}_N^{(p)} = \bigoplus_{1<p<N} {\mathsf A}_N^{(p)}.\]
We also have $\tr E_{N,4}^{(3)}\big(\tr E_{N,4} (v)\big)=0\Rightarrow v=0$, which is used below.
The reminder is to show that 
\[  \bigg( \ker\tr E_{N,4}\oplus \bigoplus_{1<p<N} \stuffle_{p}\big( \ker\tr E_{N-p,3}\big) \bigg) \cap\bigoplus_{1<p<N} {\mathsf A}_N^{(p)}=\{0\}.\]
It follows from the above discussion that the map $\tr E_{N,4} : \bigoplus_{1<p<N} \stuffle_{p}\big( \ker\tr E_{N-p,3}\big) \rightarrow \ker\tr E_{N,4}^{(3)}$ is an injection.
Since from the injectivity of the map $\stuffle_p$ and \eqref{eq4_3} we have
\[\dim_{\Q}\bigg(  \bigoplus_{1<p<N} \stuffle_p \big( \ker\tr E_{N-p,3} \big)\bigg)=\sum_{1<p<N}\dim_{\Q} \ker\tr E_{N-p,3} = \dim_{\Q}\ker\tr E_{N,4}^{(3)},\]
one finds that the above map becomes an isomorphism. 
Thus for $v\in \ker\tr(E_{N,4}^{(3)}\cdot E_{N,4})$ we have 
$$\tr E_{N,4}(v) \in \ker\tr E_{N,4}^{(3)} = \tr E_{N,4} \bigl( \bigoplus_{1<p<N} \stuffle_{p}\big( \ker\tr E_{N-p,3}\big) \bigr),$$
which implies that there exists $v'\in\bigoplus_{1<p<N} \stuffle_{p}\big( \ker\tr E_{N-p,3}\big)$ such that $v-v'\in \ker\tr E_{N,4}$, so that $v \in \ker\tr E_{N,4} \oplus \bigoplus_{1<p<N} \stuffle_{p}\big( \ker\tr E_{N-p,3}\big)$.
Thereby $\ker\tr(E_{N,4}^{(3)}\cdot E_{N,4}) =  \ker\tr E_{N,4}\oplus  \bigoplus_{1<p<N} \stuffle_{p}\big( \ker\tr E_{N-p,3}\big)$. 
We have already shown that if $v\in \bigoplus_{1<p<N}{\mathsf A}_N^{(p)}$ satisfies $\tr E_{N,4}^{(3)}\big( \tr E_{N,4}(v)\big)=0$, then $v=0$.
This completes the proof. \qed

\

{\it Remark}.
In general, we conjecture that the dimension of the kernel of the matrix $C_{N,r}$ is given by 
\[ \dim \ker\tr C_{N,r} \stackrel{?}{=} \dim \ker\tr E_{N,r}+\sum_{1\le q\le r-2} \Bigl(\sum_{1<p<N} {\rm rank}\ C_{p,q} \cdot \dim \ker\tr E_{N-p,r-q} \Bigr).\]
Then, one can prove the equality \eqref{eq2_5} by induction on $r$, together with the conjectural equality $\sum_{k>0} \dim \ker E_{N,r}x^k=\odd(x)^{r-2}\cusp(x)$ (or equivalently, the surjectivity of the map $ F_{N,r}$ in Theorem~\ref{3_7}).

\section*{Appendix}

In this appendix, we prove Lemmas~\ref{4_1} and \ref{4_3}.

{\it Proof of Lemma~\ref{4_1}}.
Lemma~\ref{4_1} is shown by direct calculations.
We only prove (ii).
Denote $v=(a_{n_1,n_2})_{(n_1,n_2)\in \Set_{N-p,2}}$.
Then from \eqref{eq4_2} each components of the row vector $\stuffle_{p_1,p_2} (v) = (A_{n_1,\ldots,n_4}^{(p_1,p_2)})_{(n_1,\ldots,n_4)\in \Set_{N,4}}$ can be written in the form
\begin{align*}
A_{n_1,\ldots,n_4}^{(p_1,p_2)}&= \delta\tbinom{n_1,n_2}{p_1,p_2} a_{n_3,n_4} + \delta\tbinom{n_1,n_3}{p_1,p_2} a_{n_2,n_4}+\delta\tbinom{n_1,n_4}{p_1,p_2} a_{n_2,n_3} \\
&+\delta\tbinom{n_2,n_3}{p_1,p_2} a_{n_1,n_4} +\delta\tbinom{n_2,n_4}{p_1,p_2} a_{n_1,n_3} +\delta\tbinom{n_3,n_4}{p_1,p_2} a_{n_1,n_2}.
\end{align*}
The $(n_1,\ldots,n_4)$-th entry of the vector $\tr E_{N,4}\bigl(  \stuffle_{p_1,p_2}(v) \bigr)$ can be computed as follows.
\begin{align}
\notag&\sum_{(m_1,\ldots,m_4)\in \Set_{N,4}} A_{m_1,\ldots,m_4}^{(p_1,p_2)} \big( \delta\tbinom{n_1,\ldots,n_4}{m_1,\ldots,m_4}+\delta\tbinom{n_3,n_4}{m_3,m_4}b_{m_1,m_2}^{n_1} \\
\notag&+ \delta\tbinom{n_2,n_4}{m_1,m_4} b_{m_2,m_3}^{n_1} + \delta\tbinom{n_2,n_3}{m_1,m_2} b_{m_3,m_4}^{n_1}\big)\\
\notag&=\delta\tbinom{n_1,n_2}{p_1,p_2} a_{n_3,n_4} + \delta\tbinom{n_1,n_3}{p_1,p_2} a_{n_2,n_4}+\delta\tbinom{n_1,n_4}{p_1,p_2} a_{n_2,n_3} \\
\notag& + \delta\tbinom{n_2,n_3}{p_1,p_2} a_{n_1,n_4} +\delta\tbinom{n_2,n_4}{p_1,p_2} a_{n_1,n_3} +\delta\tbinom{n_3,n_4}{p_1,p_2} a_{n_1,n_2}\\
\label{c1}&+\sum_{(m_1,\ldots,m_4)\in \Set_{N,4}} \bigg( \delta\tbinom{m_1,m_2}{p_1,p_2}\delta\tbinom{n_3,n_4}{m_3,m_4} b_{m_1,m_2}^{n_1}a_{m_3,m_4}+ \delta\tbinom{m_1,m_3}{p_1,p_2}\delta\tbinom{n_3,n_4}{m_3,m_4}b_{m_1,m_2}^{n_1}a_{m_2,m_4} \\
\label{c2}&+ \delta\tbinom{m_1,m_4}{p_1,p_2}\delta\tbinom{n_3,n_4}{m_3,m_4}b_{m_1,m_2}^{n_1}a_{m_2,m_3}+\delta\tbinom{m_2,m_3}{p_1,p_2}\delta\tbinom{n_3,n_4}{m_3,m_4}b_{m_1,m_2}^{n_1}a_{m_1,m_4}\\
\label{c3}&+ \delta\tbinom{m_2,m_4}{p_1,p_2}\delta\tbinom{n_3,n_4}{m_3,m_4}b_{m_1,m_2}^{n_1}a_{m_1,m_3}+\delta\tbinom{m_3,m_4}{p_1,p_2}\delta\tbinom{n_3,n_4}{m_3,m_4}b_{m_1,m_2}^{n_1}a_{m_1,m_2}\\
\label{c4}&+ \delta\tbinom{m_1,m_2}{p_1,p_2}\delta\tbinom{n_2,n_4}{m_1,m_4}b_{m_2,m_3}^{n_1}a_{m_3,m_4}+\delta\tbinom{m_1,m_3}{p_1,p_2}\delta\tbinom{n_2,n_4}{m_1,m_4}b_{m_2,m_3}^{n_1}a_{m_2,m_4}\\
\notag&+ \delta\tbinom{m_1,m_4}{p_1,p_2}\delta\tbinom{n_2,n_4}{m_1,m_4}b_{m_2,m_3}^{n_1}a_{m_2,m_3}+\delta\tbinom{m_2,m_3}{p_1,p_2}\delta\tbinom{n_2,n_4}{m_1,m_4}b_{m_2,m_3}^{n_1}a_{m_1,m_4}\\
\label{c6}&+ \delta\tbinom{m_2,m_4}{p_1,p_2}\delta\tbinom{n_2,n_4}{m_1,m_4}b_{m_2,m_3}^{n_1}a_{m_1,m_3}+\delta\tbinom{m_3,m_4}{p_1,p_2}\delta\tbinom{n_2,n_4}{m_1,m_4}b_{m_2,m_3}^{n_1}a_{m_1,m_2}\\
\label{c7}&+ \delta\tbinom{m_1,m_2}{p_1,p_2}\delta\tbinom{n_2,n_3}{m_1,m_2}b_{m_3,m_4}^{n_1}a_{m_3,m_4}+\delta\tbinom{m_1,m_3}{p_1,p_2}\delta\tbinom{n_2,n_3}{m_1,m_2}b_{m_3,m_4}^{n_1}a_{m_2,m_4}\\
\label{c8}&+ \delta\tbinom{m_1,m_4}{p_1,p_2}\delta\tbinom{n_2,n_3}{m_1,m_2}b_{m_3,m_4}^{n_1}a_{m_2,m_3}+\delta\tbinom{m_2,m_3}{p_1,p_2}\delta\tbinom{n_2,n_3}{m_1,m_2}b_{m_3,m_4}^{n_1}a_{m_1,m_4}\\
\label{c9}&+ \delta\tbinom{m_2,m_4}{p_1,p_2}\delta\tbinom{n_2,n_3}{m_1,m_2}b_{m_3,m_4}^{n_1}a_{m_1,m_3}+\delta\tbinom{m_3,m_4}{p_1,p_2}\delta\tbinom{n_2,n_3}{m_1,m_2}b_{m_3,m_4}^{n_1}a_{m_1,m_2} \bigg).
\end{align}
One can find easily that the sum of second term of \eqref{c1} plus the sum of second term of \eqref{c2} is 0 because of \eqref{eqan}.
In the same way, we can derive $\eqref{c2}(1)+\eqref{c3}(1)=\eqref{c4}(1)+\eqref{c4}(2)=\eqref{c6}(1)+\eqref{c6}(2)=\eqref{c7}(2)+\eqref{c8}(1)=\eqref{c8}(2)+\eqref{c9}(1)=0$, where the notation, for example, \eqref{c2}(1) means the first term of \eqref{c2}.
Then the above equation can be reduced to 
\begin{align*}
&\delta\tbinom{n_1,n_2}{p_1,p_2} a_{n_3,n_4} + \delta\tbinom{n_1,n_3}{p_1,p_2} a_{n_2,n_4}+\delta\tbinom{n_1,n_4}{p_1,p_2} a_{n_2,n_3}  + \delta\tbinom{n_2,n_3}{p_1,p_2} a_{n_1,n_4} +\delta\tbinom{n_2,n_4}{p_1,p_2} a_{n_1,n_3} +\delta\tbinom{n_3,n_4}{p_1,p_2} a_{n_1,n_2}\\
&+b_{p_1,p_2}^{n_1}( a_{n_3,n_4}+a_{n_2,n_4}+a_{n_2,n_3}) + \sum_{(m_1,\ldots,m_4)\in \Set_{N,4}} \bigg( \delta\tbinom{m_3,m_4}{p_1,p_2}\delta\tbinom{n_3,n_4}{m_3,m_4} b_{m_1,m_2}^{n_1}a_{m_1,m_2} \\
&+ \delta\tbinom{m_1,m_4}{p_1,p_2}\delta\tbinom{n_2,n_4}{m_1,m_4} b_{m_2,m_3}^{n_1}a_{m_2,m_3} +  \delta\tbinom{m_1,m_2}{p_1,p_2}\delta\tbinom{n_2,n_3}{m_1,m_2} b_{m_3,m_4}^{n_1}a_{m_3,m_4} \bigg).
\end{align*}
(Note $a_{n_1,n_2}=0$ whenever $n_1+n_2\neq N-p$.)
From the assumption $\tr E_{N-p,2}(v)=0$, one has for $(n_1,\ldots,n_4)\in \Set_{N,4}$
\begin{align*}
 \sum_{(m_1,\ldots,m_4)\in \Set_{N,4}} \delta\tbinom{m_3,m_4}{p_1,p_2}\delta\tbinom{n_3,n_4}{m_3,m_4} \big(\delta\tbinom{n_1,n_2}{m_1,m_2}+ b_{m_1,m_2}^{n_1} \big)  a_{m_1,m_2}=0,
\end{align*}
which entails 
\begin{align*}
& \tr E_{N,4}\bigl(  \stuffle_{p_1,p_2}(v) \bigr) = \\
&\bigg( b_{p_1,p_2}^{n_1} (a_{n_3,n_4}+a_{n_2,n_4}+a_{n_2,n_3}) + \delta\tbinom{n_1,n_2}{p_1,p_2}a_{n_3,n_4}+\delta\tbinom{n_1,n_3}{p_1,p_2}a_{n_2,n_4}+\delta\tbinom{n_1,n_4}{p_1,p_2} a_{n_2,n_3}\bigg)_{(n_1,\ldots,n_4)\in \Set_{N,4}}.
\end{align*}
We denote by $B_{n_1,\ldots,n_4}^{(p_1,p_2)}$ the $(n_1,\ldots,n_4)$-th entry of the above.
Then the $(n_1,\ldots,n_4)$-th entry of the vector $\tr E_{N,4}^{(3)} \big( \tr E_{N,4}\bigl(  \stuffle_{p_1,p_2}(v) \bigr)  \big)$ can be computed as follows:
\begin{align}
\notag&\sum_{(m_1,\ldots,m_4)\in \Set_{N,4}} B_{m_1,\ldots,m_4}^{(p_1,p_2)}  \big( \delta\tbinom{n_1,\ldots,n_4}{m_1,\ldots,m_4}+\delta\tbinom{n_1,n_4}{m_1,m_4} b_{m_2,m_3}^{n_2} + \delta\tbinom{n_1,n_3}{m_1,m_2} b_{m_3,m_4}^{n_2} \big)\\
\label{d1}&=b_{p_1,p_2}^{n_1} (a_{n_3,n_4}+a_{n_2,n_4}+a_{n_2,n_3}) + \delta\tbinom{n_1,n_2}{p_1,p_2}a_{n_3,n_4}+\delta\tbinom{n_1,n_3}{p_1,p_2}a_{n_2,n_4}+\delta\tbinom{n_1,n_4}{p_1,p_2} a_{n_2,n_3} \\
\label{d2}&+ \sum_{(m_1,\ldots,m_4)\in \Set_{N,4}}  \delta\tbinom{n_1,n_4}{m_1,m_4} b_{m_2,m_3}^{n_2}  \big( b_{p_1,p_2}^{m_1} (a_{m_3,m_4}+a_{m_2,m_4}+a_{m_2,m_3} )\\
\label{d3}&+ \delta\tbinom{m_1,m_2}{p_1,p_2}a_{m_3,m_4} +\delta\tbinom{m_1,m_3}{p_1,p_2} a_{m_2,m_4} +\delta\tbinom{m_1,m_4}{p_1,p_2} a_{m_2,m_3} \big)\\
\label{d4}&+ \sum_{(m_1,\ldots,m_4)\in \Set_{N,4}}  \delta\tbinom{n_1,n_3}{m_1,m_2} b_{m_3,m_4}^{n_2}  \big( b_{p_1,p_2}^{m_1} (a_{m_3,m_4}+a_{m_2,m_4}+a_{m_2,m_3} )\\
\label{d5}&+ \delta\tbinom{m_1,m_2}{p_1,p_2}a_{m_3,m_4} +\delta\tbinom{m_1,m_3}{p_1,p_2} a_{m_2,m_4} +\delta\tbinom{m_1,m_4}{p_1,p_2} a_{m_2,m_3} \big).
\end{align}
We note that from $\tr E_{N-p,2}(v)=0$ one finds
\[ \eqref{d1}(2)+\eqref{d4}(1) =\sum_{(m_1,\ldots,m_4)\in \Set_{N,4}} \delta\tbinom{n_1,n_3}{m_1,m_2} b_{p_1,p_2}^{m_1}   \bigg( \delta\tbinom{n_2,n_4}{m_3,m_4} + b_{m_3,m_4}^{n_2} \bigg) a_{m_3,m_4} =0,\]
where we used the same notation as before.
We also have $\eqref{d1}(3)+\eqref{d2}(3)=0$ in a similar way.
It follows from $\tr E_{N-p,2}(v)=0$ that $\eqref{d1}(5)+\eqref{d5}(1)=\eqref{d1}(6)+\eqref{d3}(3)=0$.
Using $b_{n,n'}^m+b_{n',n}^m=0$, it follows $\eqref{d3}(1)+\eqref{d3}(2)=\eqref{d5}(2)+\eqref{d5}(3)=0$, and also changing variables $m_3\leftrightarrow m_4$ shows $\eqref{d4}(2)+\eqref{d4}(3)=0$, and changing variables $m_2\leftrightarrow m_3$ gives $\eqref{d2}(1)+\eqref{d2}(2)=0$.
So, the remainder is
\begin{align*}
& \bigg( \delta{\tbinom{n_1,n_2}{p_1,p_2}} + b_{p_1,p_2}^{n_1} \bigg)a_{n_3,n_4}= e\tbinom{n_1,n_2}{p_1,p_2}a_{n_3,n_4}=\sum_{(t_1,t_2)\in \Set_{p,2}}  e\tbinom{t_1,t_2}{p_1,p_2}\delta\tbinom{t_1,t_2}{n_1,n_2} a_{n_3,n_4},
\end{align*}
which shows 
\[\tr E_{N,4}^{(3)} \big( \tr E_{N,4} \big( \stuffle_{p_1,p_2} (v) \big)\big) = \sum_{(t_1,t_2)\in \Set_{p,2}}  e{\tbinom{t_1,t_2}{p_1,p_2}} \inj_{t_1,t_2} ( v).\]
From \eqref{eq4_3} one has $\tr E_{N,4}^{(3)} \big( \tr E_{N,4} \big( \stuffle_{p_1,p_2} (v) \big)\big) \in \ker\tr E_{N,4}^{(2)}$.
Thus we complete the proof of (ii).
$\qed$

\

For the proof of Lemma~\ref{4_3}, the important fact is that the algebra ${\mathbf F}$ is isomorphic to the polynomial algebra in the Lyndon words (see \cite[Theorem in p.589]{GC}).
Define a total ordering for the set $\{ z_3,z_5,\ldots \}$ as $z_{n_1}<z_{n_2}$ for $n_1<n_2$. 
The Lyndon word $w$ is defined as an element of $\{ z_3,z_5,\ldots \}^{\times}$ such that $w$ is smaller than every strict right factor in the lexicographic ordering $w<v$ if $w=uv$, where $u,v\neq\emptyset$.
Hereafter, the basis of ${\mathbf F}_{N,r}$ consisting of monomials in the Lyndon words is called the Lyndon basis.
For example, the Lyndon basis of the $\Q$-vector space ${\mathbf F}_{N,2}$ is given by the set $\{z_{n_1}z_{n_2} \mid (n_1,n_2)\in L_{N}\}\cup\{ z_{n_1}\ \sh \ z_{n_2}\mid (n_1,n_2)\in L^{\ast}_N \}$, where we set
\[  L_{N} := \{ (n_1,n_2)\in \Set_{N,2} \mid n_1<n_2\} \ \mbox{and} \ L^{\ast}_N:= \{ (n_1,n_2)\in \Set_{N,2} \mid n_1\le n_2\} .\]

To prove Lemma~\ref{4_3}, we have only to show the following lemma (the rest follows from the standard linear algebra).
\begin{gauss}
For integers $N_1,N_2>0$, the set $\{ \alpha_i \ \sh \ \beta_j \mid 1\le i \le g,\ 1\le j\le h\}$ is linearly independent over $\Q$, where we denote by $\{\alpha_1,\ldots,\alpha_g\}$ (resp. $\{\beta_1,\ldots,\beta_h\}$) the Lyndon basis of ${\mathbf F}_{N_1,2}$ (resp. ${\mathbf F}_{N_2,2}$).
The set $\{ \alpha_i\ \sh \ \alpha_j \mid 1\le i\le j\le g \}$ is also linearly independent.
\end{gauss}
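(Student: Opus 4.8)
The plan is to pass to the polynomial model of the shuffle algebra and convert linear independence into a statement about distinctness of monomials. By the fundamental theorem quoted just above, $({\mathbf F},\sh)$ is the free commutative polynomial algebra on the set of Lyndon words, so the monomials in the Lyndon words form a $\Q$-basis of ${\mathbf F}$ and $\sh$ is ordinary polynomial multiplication. First I would record the explicit shape of a Lyndon basis element of ${\mathbf F}_{N,2}$: by the description preceding the lemma it is either a single depth-two Lyndon word $z_az_b$ with $a<b$ (a degree-one monomial in a depth-two generator), or a product $z_a\sh z_b$ with $a\le b$ of two depth-one Lyndon generators (a degree-two monomial in depth-one generators). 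Consequently each product $\alpha_i\sh\beta_j$ is again a monomial in the Lyndon words, of weight $N_1+N_2$ and depth $4$. Since distinct monomials of a polynomial ring are linearly independent over $\Q$, it suffices to prove that $(i,j)\mapsto\alpha_i\cdot\beta_j$ (respectively $\{i,j\}\mapsto\alpha_i\cdot\alpha_j$) sends distinct indices to distinct monomials.

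Next I would establish this distinctness by a unique-factorisation argument, organised by the number of Lyndon factors of the product monomial $m$. Writing $D_1=\{z_3,z_5,\dots\}$ for the depth-one Lyndon words and $D_2=\{z_az_b\mid a<b\ \text{odd}\}$ for the depth-two ones, a depth-four monomial has exactly one of three shapes: $\ell_1\ell_2$ with $\ell_1,\ell_2\in D_2$; or $z_az_b\,\ell$ with $z_a,z_b\in D_1$ and $\ell\in D_2$; or $z_az_bz_cz_d$ with all four factors in $D_1$. Any factorisation $m=\alpha\cdot\beta$ into two depth-two monomials must split these factors so that each part again has depth $2$, which forces $\{\alpha,\beta\}=\{\ell_1,\ell_2\}$ in the first shape, $\{\alpha,\beta\}=\{\ell,\,z_a\sh z_b\}$ in the second, and in the third makes $\alpha,\beta$ the two $\sh$-products of an unordered pairing of $\{z_a,z_b,z_c,z_d\}$. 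Thus $m$ determines the unordered pair $\{\alpha,\beta\}$ up to the choice of pairing in the last shape; here I would invoke the elementary fact that a four-element multiset admits at most one unordered partition into two pairs once the weight of one pair is prescribed, since two distinct such partitions would force two of the four weights to coincide and hence collapse to the same partition. This shows $m$ determines $\{\alpha,\beta\}$ uniquely, which already proves the second assertion, whose indices are taken with $i\le j$ precisely to parametrise unordered pairs.

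For the first assertion I would then break the residual swap symmetry using the weights. With the unordered pair $\{\alpha,\beta\}$ recovered from $m$, the ordered datum $(\alpha_i,\beta_j)$ singled out by $\mathrm{wt}(\alpha_i)=N_1$ and $\mathrm{wt}(\beta_j)=N_2$ is unique as soon as the two members of the pair carry different weights, which holds whenever $N_1\neq N_2$. Hence $(i,j)\mapsto\alpha_i\cdot\beta_j$ is injective and $\{\alpha_i\sh\beta_j\}$ is a family of distinct monomials, as required; the coincidence $\alpha_i\sh\beta_j=\alpha_j\sh\beta_i$ forced by commutativity of $\sh$ when $N_1=N_2$ is exactly the degeneracy that the second assertion sidesteps by indexing with $i\le j$. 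Passing from this lemma to Lemma~\ref{4_3} is then the standard remark that a bilinear multiplication sending a basis of $\mathbf{B}_1\otimes\mathbf{B}_2$ to linearly independent vectors is injective, where the hypothesis $\mathbf{B}_1\cap\mathbf{B}_2=\{0\}$ rules out the symmetric coincidences in the diagonal weight case.

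I expect the main obstacle to be precisely this swap symmetry together with the degree-four pairing count: one must verify that commutativity of $\sh$ is the \emph{only} source of coincidences, i.e.\ that no genuinely different pairing of four depth-one generators reproduces the same depth-four monomial once a pair-weight is fixed. Everything else, namely the translation through the polynomial model and the bookkeeping of the three monomial shapes, is routine.
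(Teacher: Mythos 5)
Your proposal is correct and follows essentially the same route as the paper: both pass to the polynomial model on Lyndon words and reduce linear independence to distinctness of the product monomials, with the only substantive point being the case of four depth-one generators. Where the paper settles that case by an explicit check over permutations $\sigma\in\mathfrak{S}_4$ acting on $L_{N_1,N_2}$, you use the equivalent observation that a four-element multiset admits at most one partition into two pairs with prescribed pair-weights; both arguments are sound.
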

\begin{proof}

Notice that each of the sets
\begin{align*}
\Pi_{N}&:=\{ z_{n_1}\ \sh \ z_{n_2} \ \sh \ z_{n_3} \ \sh \ z_{n_4} \mid (n_1,\ldots,n_4)\in \Set_{N,4}, n_1\le n_2\le n_3\le n_4 \},\\
\Pi_{N_1,N_2}^{(1)}&:=\{ z_{n_1}\ \sh \ z_{n_2} \ \sh \ z_{n_3}z_{n_4} \mid (n_1,n_2)\in L_{N_1}^{\ast},\ (n_3,n_4)\in L_{N_2} \},\\
\Pi_{N_1,N_2}^{(2)}& :=\begin{cases} \{ z_{n_1}z_{n_2} \ \sh \ z_{n_3}z_{n_4} \mid (n_1,n_2)\in L_{N_1},\ (n_3,n_4)\in L_{N_2} \} & \mbox{if\ }N_1\neq N_2\\
 \{ z_{n_1}z_{n_2} \ \sh \ z_{n_3}z_{n_4} \mid (n_1,n_2)\in L_{N_1},\ (n_3,n_4)\in L_{N_2},\ n_1\le n_3 \} &\mbox{if\ } N_1=N_2 \end{cases}
\end{align*}
is a subset of the Lyndon basis of ${\mathbf F}_{N,4}$.
Therefore, the set
$$\Pi_N\cup \bigcup_{(N_1,N_2)\in \Set_{N,2}} \Pi_{N_1,N_2}^{(1)}  \cup \bigcup_{(N_1,N_2)\in L_{N}^{\ast}} \Pi_{N_1,N_2}^{(2)}$$
is linearly independent over $\Q$.
The condition $n_1\le n_3$ in $\Pi_{N,N}^{(2)}$ can be explained as follows.
Let $m= {\rm max} \{ n_1 \mid (n_1,n_2)\in L_{N}\}$.
The choices of the shuffle product of two Lyndon words of depth $2$ and weight $N$ are given by the following.
 $$ \xymatrix{  
 z_3z_{N-3}   \ar@{-}[r] \ar@{-}[rd] \ar@{-}[rdd]\ar@{-}[rddd] &  z_3z_{N-3} \\ 
z_{5}z_{N-5} \ar@{-}[r]\ar@{-}[rd] \ar@{-}[rdd] &z_{5}z_{N-5} \\
 \vdots  & \vdots \\
z_{m}z_{N-m} \ar@{-}[r] &z_{m}z_{N-m}
 }$$
Let
$$ L_{N_1,N_2} =\begin{cases} \{ (n_1,\ldots,n_4) \mid (n_1,n_2)\in L_{N_1}^{\ast}, (n_3,n_4)\in L_{N_2}^{\ast} \} & \mbox{if\ } N_1\neq N_2\\
 \{ (n_1,\ldots,n_4) \mid (n_1,n_2)\in L_{N_1}^{\ast}, (n_3,n_4)\in L_{N_2}^{\ast} ,n_1\le n_3\} & \mbox{if\ } N_1=N_2 \end{cases} $$
and
\[ \Pi_{N_1,N_2} =\{ z_{n_1}\ \sh \ z_{n_2} \ \sh \ z_{n_3} \ \sh \ z_{n_4} \mid (n_1,n_2,n_3,n_4)\in L_{N_1,N_2} \} .\] 
Then we easily find that 
\begin{align*}
\{ \alpha_i\ \sh \ \beta_j \mid 1\le i \le g , 1\le j\le h\} &= \Pi_{N_1,N_2} \cup \Pi_{N_1,N_2}^{(1)}\cup \Pi_{N_2,N_1}^{(1)} \cup \Pi_{N_1,N_2}^{(2)},\\
\{ \alpha_i\ \sh \ \alpha_j \mid 1\le i \le j\le g\} &=  \Pi_{N_1,N_1} \cup \Pi_{N_1,N_1}^{(1)}\cup \Pi_{N_1,N_1}^{(2)}.
\end{align*}
For our purpose, it only remains to verify that the set $\Pi_{N_1,N_2}$ is linearly independent, or equivalently, an overlap arising from the commutativity of the shuffle product $\sh$ doesn't occur.
This overlap comes up if we can choose $\sigma\in \mathfrak{S}_4$ such that $\sigma(l)=(n_{\sigma(1)},\ldots,n_{\sigma(4)})\in L_{N_1,N_2}$ and $\sigma(l)\neq l$ for $l=(n_1,\ldots,n_4)\in L_{N_1,N_2}$ (because this $\sigma$ gives the possible relation $z_{n_1}\ \sh \ z_{n_2} \ \sh \ z_{n_3} \ \sh \ z_{n_4}=z_{n_{\sigma(1)}}\ \sh \ z_{n_{\sigma(2)}} \ \sh \ z_{n_{\sigma(3)}} \ \sh \ z_{n_{\sigma(4)}}$ in $\Pi_{N_1,N_2}$).
However, we can check that for all $\sigma\in\mathfrak{S}_4$, if $\sigma(l)\in L_{N_1,N_2}$, then we have $\sigma(l)= l$.
We show a few cases. 
Assume $N_1\le N_2$.
For $l=(n_1,\ldots,n_4)\in L_{N_1,N_2}$, one can check that
\begin{align*}
\sigma(l)=(n_1,n_2,n_4,n_3)\in L_{N_1,N_2}&\Rightarrow n_4\le n_3 \Rightarrow n_3=n_4,\ \mbox{since}\ n_3\le n_4 \Rightarrow l=\sigma(l),\\
\sigma(l)=(n_1,n_3,n_2,n_4)\in L_{N_1,N_2}&\Rightarrow n_1+n_3=n_1=n_1+n_2 \Rightarrow n_2=n_3\Rightarrow l=\sigma(l),\\
\sigma(l)=(n_3,n_4,n_1,n_2)\in L_{N_1,N_2}&\Rightarrow n_2\le n_1 \Rightarrow n_1=n_2\Rightarrow n_3\le n_1\ \mbox{and} \ n_1\le n_3\\
&\Rightarrow n_1=n_3\Rightarrow l=\sigma(l),\\
\sigma(l)=(n_4,n_2,n_1,n_3)\in L_{N_1,N_2}&\Rightarrow n_4+n_2=n_1+n_2\Rightarrow n_1=n_4\Rightarrow n_4=n_1\le n_3\le n_4 \\
&\Rightarrow n_1=n_3=n_4 \Rightarrow l=\sigma(l).
\end{align*}
The reminder can be checked in the same way.
This completes the proof.
\end{proof}

\section*{Acknowledgement}
This paper is a part of the author's doctoral dissertation submitted to Kyushu University (2014).
The author is grateful to Professor Masanobu Kaneko and Professor Seidai Yasuda for initial advice and many useful comments over the course of this work.
His thanks are also directed to Professor Kentaro Ihara who pointed out factual mistakes in the early paper.
Without his patient help, this paper could not be completed. 
The author thanks the referees and Professor Hidekazu Furusho for careful reading of the manuscript and useful comments.
A special thank is also extended to Francis Brown.


\end{document}